\documentclass[a4paper,fleqn,11pt]{cas-sc}

\usepackage[authoryear]{natbib}

\usepackage{setspace}
\usepackage{fontawesome5}
\usepackage{soul}

\usepackage{bm}
\usepackage[utf8]{inputenc}
\usepackage{amsmath}
\usepackage{amsfonts}
\usepackage{amssymb}
\usepackage{multicol}
\usepackage{multirow}
\usepackage{adjustbox,lipsum}
\usepackage{pgf,tikz}
\usepackage{graphicx}
\usepackage{epstopdf}
\usepackage{rotating}
\usepackage{booktabs}
\usepackage{longtable}

\usepackage{caption}
\usepackage{lscape, comment}
\usepackage{amsthm} % para teoremas
\usepackage[linesnumbered, lined, boxed, commentsnumbered, ruled, vlined]{algorithm2e}
\usepackage{tablefootnote}

\usepackage{subcaption}
\usepackage{indentfirst}
\usepackage[dvipsnames]{xcolor}
\usepackage{verbatim}
\usepackage{pdflscape}   % mejor que lscape: rota la página en el visor PDF
\usepackage{adjustbox}   % para escalar y centrar
\usepackage{float}       % para usar [H]
\usepackage{graphicx}    % necesario para \rotatebox

\usepackage[pagewise]{lineno}
\usepackage{caption} 

\DeclareCaptionFont{casstyle}{\sffamily\small}
\theoremstyle{plain}
\newtheorem{lemma}{Lemma}
\newtheorem{prop}{Proposition}
\newtheorem{coro}{Corollary}

\newtheorem{example}{Example}
\newtheorem*{example*}{Example}

\theoremstyle{remark}
\newtheorem{remark}{Remark}

\allowdisplaybreaks

\usepackage{enumitem}
\setlist[enumerate,1]{label=\arabic*.}
\setlist[enumerate,2]{label*=\arabic*.}
\setlist[enumerate,3]{label*=\arabic*.}

\date{\today}

\def\tsc#1{\csdef{#1}{\textsc{\lowercase{#1}}\xspace}}
\tsc{WGM}
\tsc{QE}
\tsc{EP}
\tsc{PMS}
\tsc{BEC}
\tsc{DE}
\usepackage{tikz}
\usepackage{pgfplots}
\pgfplotsset{compat=1.18}

\begin{document}
\let\WriteBookmarks\relax
\def\floatpagepagefraction{1}
\def\textpagefraction{.001}

% Short title
\shorttitle{Mixture Linear Ordering Problem}

% Short author
\shortauthors{Aledo, Domínguez, Jaime-Alcántara, and Landete}

% Main title of the paper
\title[mode=title]{Uncovering latent consensus in heterogeneous populations: The Mixture Linear Ordering Problem}
%\title[mode=title]{Working with heterogeneous preference groups: The Mixture Linear Ordering Problem} 

\author[uclm]{Juan A. Aledo}[orcid=0000-0003-1786-8087]
\ead{JuanAngel.Aledo@uclm.es}

\author[um]{Concepci\'on Dom\'inguez}[orcid=0000-0002-9046-4997]
\ead{concepcion.dominguez@um.es}

\author[umh]{Juan~de Dios Jaime-Alc\'antara}[orcid=0009-0004-4841-1295]
\ead{jjaime@umh.es}
\cormark[1]

\author[umh]{Mercedes Landete}[orcid=0000-0002-5201-0476]
\ead{landete@umh.es}

\address[uclm]{Departamento de Matemáticas, Universidad de Castilla-La Mancha, Albacete 02071, Spain}
\address[um]{Departamento de Estadística e Investigación Operativa, Universidad de Murcia, Murcia 30100, Spain}
\address[umh]{Departamento de Estadística, Matemáticas e Informática, Instituto Centro de Investigación Operativa, Universidad Miguel Hernández de Elche, Alicante 03202, Spain}

\cortext[cor1]{Corresponding author. Email: jjaime@umh.es}

% Here goes the abstract
\begin{abstract}
The classical linear ordering problem seeks a single ranking representing a given preference matrix. While suitable for homogeneous populations, it fails when observed preferences arise from several latent groups with distinct ranking patterns. To address this limitation, we introduce an extension partitioning the population into latent groups, each characterized by its own linear order, relative size, and preference structure. The observed matrix is then explained as the aggregate outcome of these group-specific preferences. We develop mixed-integer programming formulations, including a compact reformulation yielding a geometric interpretation within the linear ordering polytope. Because exact solutions become computationally demanding for larger instances, we propose a multi-start alternating-direction matheuristic iteratively updating group rankings and weights. Computational experiments on synthetically generated instances, matching sizes typical in preference aggregation scenarios, demonstrate the effectiveness of the exact approach  in successfully recovering the underlying groups. Furthermore, the proposed heuristic delivers high-quality solutions in substantially shorter times, occasionally improving upon the exact method's best incumbent in difficult instances within the imposed time limit.
\end{abstract}

% Use if graphical abstract is present
% \begin{graphicalabstract}
% \includegraphics{figs/grabs.pdf}
% \end{graphicalabstract}

% Research highlights
%\begin{highlights}
%\item Research highlights item 1
%\item Research highlights item 2
%\item Research highlights item 3
%\end{highlights}

% Keywords
% Each keyword is seperated by \sep
\begin{keywords}
Combinatorial optimization \sep Linear Ordering Problem \sep Rank aggregation \sep Heterogeneous populations \sep Mixed-integer programming
\end{keywords}

\maketitle

\section{Introduction}

The Linear Ordering Problem (LOP) is a classical combinatorial optimization problem, first introduced by \cite{chenery1958international}, that seeks to determine a single linear order maximizing consistency with a given matrix of pairwise preferences or dominance relations. In preference aggregation settings, this model provides a natural and effective way to summarize observed comparisons into one global ranking. From a computational perspective, \cite{garey1979computers} established that the LOP is an NP-hard problem. Despite this complexity, the LOP has had a profound impact on several fields. Notably, Wassily Leontief was awarded the Nobel Prize in Economics for developing the input-output model \citep{leontief1936quantitative}, an economic framework whose structural analysis relies heavily on the LOP through the triangulation of matrices. Since then, the problem has found numerous real-world applications, ranging from voting aggregation and sports rankings to scheduling with precedences. A comprehensive review of these applications, alongside the structure of the linear ordering polytope and the exact and heuristic methods developed to solve the problem, is provided by \cite{marti2022exact}.

However, the core feature of the LOP of aggregating all information into a single global ranking can also become a significant limitation. When the underlying population is heterogeneous (comprising distinct subgroups with internally coherent but differing preference patterns), forcing a single consensus order often yields a ranking that primarily reflects the dominant group while masking minority structures. To address related disparities, a growing body of literature has explored fairness in rankings \citep[see, e.g.,][for an overview]{pitoura2022fairness}. These approaches typically focus on the fairness of the items being ranked; for instance, by proposing frameworks that allocate exposure aligned with an item's merit \citep{singh2018fairness}, or by employing constrained optimization to prevent the under- or over-representation of specific groups of items in the final ranking \citep{celis2018ranking, chakraborty2022fair}. While these methods successfully mitigate visibility and representation biases among candidates, they do not account for the structural diversity of the population generating the preferences. In heterogeneous preference aggregation, the challenge is not merely to adjust item exposure within a single ranking, but rather to explicitly model and preserve the latent preference structures of minority voter groups, preventing their internal coherence from being diluted by a global consensus.

To this end, our approach departs from the traditional paradigm of a single consensus ranking by proposing a model that provides multiple rankings to capture the underlying heterogeneity of the population. In the literature, the problem of finding several representative rankings has been primarily addressed through clustering techniques, such as mixture models or partition methods specifically designed for ranking data \citep{murphy2003mixtures, lu2014effective}. While these methods aim to reflect different group preferences, a fundamental limitation is that they typically require access to the individual preferences of every voter to perform the clustering. In contrast, our approach tackles a more challenging and data-efficient scenario: we aim to extract these latent ranking structures and their relative sizes relying exclusively on the aggregated pairwise preference matrix, following the line of \cite{aledo2025consensus}. By operating with this reduced level of information, our method is able to discover and preserve distinct preference patterns without the need for the full disaggregated profile of the population. This perspective shifts the focus of fairness from the items being ranked to the voters themselves, ensuring that the internal coherence of different population groups is represented even when only collective dominance data is available.

In many real-world applications, individual voter profiles are unavailable or naturally non-existent, leaving the aggregated pairwise matrix as the sole source of information. This is the case of the LOP, as commented above, but also of other rank aggregation frameworks such as the Optimal Bucket Ordering Problem (OBOP), which was initially formalized by \cite{gionis2006algorithms} and has been further developed in recent research \citep{aledo2018approaching, aledo2025consensus}. Such reliance on aggregate data frequently arises due to strict privacy policies and data anonymization in web search and e-commerce logs (where only aggregate click-through comparisons are retained), in crowdsourcing platforms based on micro-tasks, or in sports tournaments where the basic unit of information is an aggregate matrix of head-to-head outcomes. Motivated by this challenge, we introduce a generalization of the classical LOP, which we term the \textit{Mixture Linear Ordering Problem} (MLOP). This model is designed to analyze any aggregated preference matrix to discover and extract potential latent heterogeneous profiles, without requiring prior knowledge of whether such subgroups actually exist in the population.

The richness of the aggregated pairwise matrix as a source of information has been explored from several angles in recent literature. On the one hand, \cite{aparicio2020linear} introduced the LOP of Sets, which leverages a ranking of individual elements to derive the closest consistent ranking of the groups of a given partition, measured in terms of the Kendall-$\tau$ distance. In the same spirit, \cite{Labbe2023cluster} proposed a framework to calculate an adequate order of items/sets when an order of sets/items is available. On the other hand, the Multiple LOP \citep{cameron2021linear,anderson2022fairness,benito2026multiple} addresses the fact that the aggregated matrix often admits a large number of equally optimal solutions with significantly different orderings, proposing models to characterize this solution set and inform the user about the representativeness of any single optimal ranking. Both lines of work underscore a common insight: a single ranking extracted from an aggregated matrix may fail to capture the full informational content it encodes. The present work pursues this idea further by asking whether the matrix itself can be decomposed into a mixture of latent preference structures, each associated with a coherent linear order and a relative population weight.

To formally address the MLOP, we first present a natural mixed-integer programming formulation. However, to enhance its practical resolution, we subsequently develop a more efficient compact reformulation. This alternative mathematical representation is particularly noteworthy as it opens the door to an elegant geometric study of the solution space. Specifically, it allows us to recast the MLOP purely in geometric terms: finding the optimal mixture profile is formally equivalent to minimizing the $L_1$ distance from a distance from a point representing the aggregated data to a specific subset of the linear ordering polytope. This perspective not only provides a novel and visually intuitive interpretation of the problem, but also bridges the gap between combinatorial rank aggregation and geometric optimization, shedding light on the underlying structure of the solutions. Regarding its complexity, the MLOP is inherently NP-hard. This is because it constitutes a generalization of the classical LOP, which is recovered as a special case when only a single group  is considered. Since any instance of the classical LOP can be trivially embedded into an MLOP instance, and the LOP is known to be NP-hard, the MLOP naturally inherits this computational intractability and is at least as difficult to solve. To address this challenge, we develop a scalable heuristic algorithm based on the alternating direction method described in \cite{cattaruzza2024exact}. This approach allows us to efficiently navigate the complex solution space and obtain high-quality approximations in reasonable computational times. Finally, we demonstrate the effectiveness of our exact and heuristic approaches through extensive computational experiments on custom-generated synthetic instances, alongside a practical case study using a real-world preference dataset.

The remainder of this paper is organized as follows. Section~\ref{sec:prob_def} formally defines our problem; it begins with a mathematical description of the classical LOP, discusses the motivation for seeking multiple preference groups, and formally introduces the MLOP. Section~\ref{sec:formulation} presents our mixed-integer programming models, introducing first an extended logical formulation followed by a more efficient compact reformulation. Additionally, it provides a geometric interpretation of the problem as minimizing the $L_1$ distance from a given point to a specific subset of the linear ordering polytope. Section~\ref{sec:heuristic} details the proposed heuristic algorithm based on the alternating direction method. Section~\ref{sec:exp_com} reports the computational experiments conducted on synthetic instances to evaluate the performance of both the exact and heuristic approaches. Section~\ref{sec:sushi} presents the practical application of our model using a sushi preference dataset. Finally, Section~\ref{sec:CFR} draws the main conclusions of this work and outlines directions for future research.

\section{Preliminaries and problem definition} \label{sec:prob_def}

The \emph{Linear Ordering Problem} (LOP) is a classical combinatorial optimisation problem whose goal is to determine a linear order of $n$ items that maximises the total weight of the pairwise comparisons consistent with that order (see \cite{marti2022exact}). Using the notation of \cite{ceberio2015linear}, the problem is defined on a weight matrix $C=(c_{rs})_{n\times n}$, where each coefficient $c_{rs}$ represents the preference of item $r$ over item $s$. In order to ensure consistency with the mixture framework introduced later, we consider normalised preference matrices satisfying $c_{rs}+c_{sr}=1$ for all distinct $r,s\in[[n]]$, where $[[n]]=\{1,2,\dots,n\}$. The elements of the main diagonal are left undefined (and represented by dashes in the examples), since the specific value assigned to them has no impact on the objective function.

Given a permutation $\sigma=(\sigma_1,\sigma_2,\dots,\sigma_n)$ of the $n$ items, where $\sigma_k$ denotes the item placed in position $k$, we denote its LOP value by
\[
    f(\sigma, C) = \sum_{r=1}^{n-1}\sum_{s=r+1}^n c_{\sigma_r\sigma_s},
\]
which corresponds to the total weight of all pairwise relations consistent with the order induced by $\sigma$. For convenience, we use the notation $r \succ s$ to denote that item $r$ precedes item $s$ in the ranking. The LOP seeks to find a permutation that maximises this quantity.

To formulate the problem as an integer program, we adopt the classical binary precedence variables introduced in \cite{marti2022exact}. We use the following decision variables:
\[
x_{rs} =
\begin{cases}
1, & \text{if item } r \text{ precedes item } s,\\[1mm]
0, & \text{otherwise},
\end{cases}
\qquad \forall\, r,s \in [[n]] : r \neq s.
\]
Using these variables, a standard integer programming formulation of the LOP is
\begin{subequations}\label{model:LOP}
    \begin{align}
    \max \quad & \sum_{r \, = \,1}^n \sum_{\substack{s \, = \, 1 \\ r \, \neq \, s}}^n c_{rs}\, x_{rs}
        \label{objLOP} \\[2mm]
    \text{s.t.}\quad
        & x_{rs} + x_{sr} = 1
        && \forall\, r,s \in [[n]] : r < s,
        \label{cons:pairsLOP} \\
        & x_{rs} + x_{st} + x_{tr} \le 2
        && \forall\, r,s,t \in [[n]] : r < s,\; r < t,\; s \neq t,
        \label{cons:transitivityLOP} \\
        & x_{rs} \in \{0,1\}
        && \forall\, r,s \in [[n]] : r \neq s.
        \label{cons:binaryLOP}
    \end{align}
\end{subequations}
The objective function \eqref{objLOP} is the total weight of all precedence relations consistent with the final ranking. Constraints \eqref{cons:pairsLOP} impose strict comparability between every pair of items, ensuring that for each pair exactly one precedence direction is selected. Constraints \eqref{cons:transitivityLOP} forbid directed 3-cycles in the precedence relation. Together with \eqref{cons:pairsLOP}, this ensures that the induced relation is transitive and therefore defines a linear order of the items. Finally, the binary restrictions \eqref{cons:binaryLOP} guarantee that the feasible region corresponds exactly to the set of all linear orders of the $n$ items.

While formulation \eqref{model:LOP} is standard and intuitive, it contains redundant information. Exploiting the comparability equations \eqref{cons:pairsLOP}, one can eliminate half of the precedence variables by substituting $x_{sr}=1-x_{rs}$ for all $r>s$, thereby obtaining an equivalent compact formulation involving only variables $x_{rs}$ for $r<s$. Since this reformulation is classical, we omit its explicit statement and retain it only as a conceptual reference for the compact model introduced later.

We consider the following toy instance as a running example to illustrate the problem throughout the paper.
\begin{example} \label{ex:LOP}
Consider the instance of the {\rm LOP} with $n = 4$ items and the following weight matrix:
\[
\begin{pmatrix}
- & 0.9 & 0.9 & 0.9 \\[0.5em]
0.1 & - & 0.5 & 0.9 \\[0.5em]
0.1 & 0.5 & - & 0.9 \\[0.5em]
0.1 & 0.1 & 0.1 & -
\end{pmatrix}
\]
Solving the classical {\rm LOP} for this instance yields the optimal ranking
\[
1 \succ 2 \succ 3 \succ 4.
\]
The objective value of this ranking is $5$.
\end{example}

\subsection{Motivation for heterogeneous groups}

The classical LOP produces a single ranking that aggregates all preference information into a unique linear order. While this approach is well suited to settings where the underlying population can be regarded as homogeneous, it may lead to unfair or unrepresentative outcomes when preferences differ across subgroups. In such situations, the observed preference matrix may reflect the coexistence of several distinct preference patterns, even though only an aggregated matrix is available. A single global solution may therefore misrepresent legitimate alternative structures simply because they correspond to a smaller portion of the population.

\begin{example}
Consider again the instance introduced in Example~\ref{ex:LOP}, where solving the classical {\rm LOP} yields the ranking $1 \succ 2 \succ 3 \succ 4$ with objective value $5$. To illustrate how heterogeneous preference structures can be hidden in an aggregated matrix, consider the following latent decomposition of the original matrix into two components weighted at $90\%$ and $10\%$:
\[
\begin{pmatrix}
- & 0.9 & 0.9 & 0.9 \\[0.5em]
0.1 & - & 0.5 & 0.9 \\[0.5em]
0.1 & 0.5 & - & 0.9 \\[0.5em]
0.1 & 0.1 & 0.1 & -
\end{pmatrix}
=
0.9\cdot
\begin{pmatrix}
- & 1 & 1 & 1 \\[0.5em]
0 & - & \dfrac{5}{9} & 1 \\[0.5em]
0 & \dfrac{4}{9} & - & 1 \\[0.5em]
0 & 0 & 0 & -
\end{pmatrix}
+
0.1\cdot
\begin{pmatrix}
- & 0 & 0 & 0 \\[0.5em]
1 & - & 0 & 0 \\[0.5em]
1 & 1 & - & 0 \\[0.5em]
1 & 1 & 1 & -
\end{pmatrix}.
\]

We now solve the {\rm LOP} separately on each component matrix. 
For the first component, which carries $90\%$ of the weight, the optimal ranking is
\[
1 \succ 2 \succ 3 \succ 4,
\]
with objective value $\tfrac{50}{9}$. For the second component, weighted at $10\%$, the optimal ranking is
\[
4 \succ 3 \succ 2 \succ 1,
\]
with objective value $6$. The aggregated performance obtained by combining the two latent rankings according to their weights is
\[
0.9\cdot\tfrac{50}{9} + 0.1\cdot 6 = 5.4,
\]
which exceeds the classical {\rm LOP} value of $5$. This illustrates that an aggregated preference matrix may hide heterogeneous underlying structures that cannot be captured by a single linear order.
\end{example}

These observations motivate the introduction of a generalised framework that seeks to identify multiple latent preference structures whose combination explains the observed matrix. The aim is to preserve heterogeneous preference patterns rather than collapsing them into a single global ranking, thereby providing a richer and more representative description than the classical LOP.

\subsection{The Mixture Linear Ordering Problem}

The Mixture Linear Ordering Problem with $g$ groups, denoted by $\mathrm{MLOP}_g$, generalises the classical Linear Ordering Problem by modelling the observed preference matrix as the result of several latent preference structures. Rather than assuming that all individuals share a single homogeneous preference pattern, $\mathrm{MLOP}_g$ allows the observed pairwise comparison matrix to arise from up to $g$ underlying preference groups, each endowed with its own linear order and representing a certain proportion of the population. The aim is to identify a collection of latent group-specific rankings together with their associated weights so that the aggregated preference structure is explained in a coherent and representative manner.

Formally, let $n$ be the number of items and let $C = (c_{rs})_{n \times n}$ denote the observed (normalised) preference matrix, where $c_{rs}$ represents the proportion of individuals who prefer item $r$ to item $s$. We fix a maximum number of groups $g \ge 1$. A feasible solution to $\mathrm{MLOP}_g$ consists of the following components:

\begin{itemize}
    \item For each group $i \in [[g]]$, a linear order $\sigma^{i}$ of the $n$ items.

    \item For each group $i \in [[g]]$, a weight $\omega^i \ge 0$ representing the relative size of the group, satisfying
    \[
        \sum_{i=1}^g \omega^i = 1.
    \]

    \item For each group $i \in [[g]]$, a latent normalised preference matrix $C^{i} = (c_{rs}^{i})_{n \times n}$.
\end{itemize}

These latent matrices are not observed but are required to reproduce the observed matrix through the mixture constraint
\[
    C = \sum_{i=1}^{g} \omega^{i} \, C^{i}.
\]
In this way, the observed pairwise comparisons are interpreted as a convex combination of several underlying group-specific preference structures.

For each group $i$, the contribution to the objective function is the classical LOP score
\[
    f\bigl(\sigma^{i}, C^{i} \bigr)
    = \sum_{r=1}^{n-1} \sum_{s=r+1}^n c_{\sigma^{i}_r \sigma^{i}_s}^{i},
\]
which measures how well the latent matrix $C^i$ aligns with the order $\sigma^{i}$. 
The objective of $\mathrm{MLOP}_g$ is to maximise the weighted sum
\[
    F(\sigma^1,\dots,\sigma^g,C^1,\dots,C^g,\omega^1,\dots,\omega^g)
    = \sum_{i=1}^g \omega^i\, f\bigl(\sigma^{i},C^{i}\bigr),
\]
subject to the mixture constraint above.

Thus, $\mathrm{MLOP}_g$ seeks a collection of latent linear orders and weights whose convex combination reproduces the observed preference matrix $C$ while maximising the total LOP value induced by these latent structures.

A natural structural property of the model is that its optimal value is non-decreasing with respect to the number of groups allowed. Indeed:

\begin{lemma} \label{lem:creciente}
For every integer $g \ge 1$, the optimal value of $\mathrm{MLOP}_g$ is less than or equal to the optimal value of $\mathrm{MLOP}_{g+1}$.
\end{lemma}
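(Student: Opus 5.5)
The plan is to embed every feasible solution of $\mathrm{MLOP}_g$ into the feasible set of $\mathrm{MLOP}_{g+1}$ without changing its objective value. Monotonicity of optimal values then follows at once, because a maximum over a set is at most the maximum over any set into which it embeds value-preservingly.

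Take a feasible solution $(\sigma^1,\dots,\sigma^g,C^1,\dots,C^g,\omega^1,\dots,\omega^g)$ of $\mathrm{MLOP}_g$. We extend it by one extra group with zero weight. Set $\omega^{g+1}=0$, let $\sigma^{g+1}$ be any linear order (for instance $\sigma^{g+1}=\sigma^1$), and let $C^{g+1}$ be any normalised matrix. A convenient concrete choice is $C^{g+1}=C$ itself, or the matrix with all off-diagonal entries equal to $1/2$; either satisfies $c^{g+1}_{rs}+c^{g+1}_{sr}=1$.

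Next we check feasibility in $\mathrm{MLOP}_{g+1}$:
\begin{itemize}
\item the weights are nonnegative and $\sum_{i=1}^{g+1}\omega^i=\sum_{i=1}^g\omega^i=1$;
\item each $C^i$ is normalised;
\item the mixture constraint holds, since $\sum_{i=1}^{g+1}\omega^iC^i=\sum_{i=1}^{g}\omega^iC^i+0\cdot C^{g+1}=C$.
\end{itemize}

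For the objective, the term $\omega^{g+1}f(\sigma^{g+1},C^{g+1})$ vanishes because $f$ is a finite sum of finite entries. Hence $F$ for the extended solution equals $F$ for the original one.

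Applying this to an optimal (or, if attainment is in doubt, arbitrarily near-optimal) solution of $\mathrm{MLOP}_g$ gives $\mathrm{OPT}_g\le\mathrm{OPT}_{g+1}$. Taking suprema avoids any need to discuss whether the optimum is attained.

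The only point needing care is that the definition of $\mathrm{MLOP}_g$ allows $\omega^i=0$. It does, since it requires only $\omega^i\ge 0$, consistent with ``up to $g$ groups.'' So no step is a genuine obstacle.
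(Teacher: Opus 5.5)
Your proof is correct and is the same argument the paper gives: add a group with weight $\omega^{g+1}=0$, an arbitrary normalised preference matrix and an arbitrary linear order, which keeps the solution feasible and leaves the objective value unchanged. Your explicit feasibility checks and the remark about taking suprema only spell out details the paper leaves implicit.
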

\begin{proof}
Any feasible solution of $\mathrm{MLOP}_g$ can be extended to a feasible solution of $\mathrm{MLOP}_{g+1}$ with the same objective value by adding an additional group with weight $\omega^{g+1} = 0$, and assigning it an arbitrary valid preference matrix and an arbitrary linear order. 
\end{proof}

This monotonicity implies that an optimal solution of $\mathrm{MLOP}_g$ does not necessarily use all $g$ groups. If fewer than $g$ groups are sufficient to reproduce the observed matrix $C$, the remaining weights may simply be set to zero. Moreover, as will be shown later, the number of active groups in an optimal solution admits a finite upper bound, so increasing $g$ enlarges the feasible set without necessarily increasing the effective number of groups used at optimality.

\medskip

We illustrate how the problem operates in practice through our running example.

\begin{example}
Continuing from Example~\ref{ex:LOP}, for $g \ge 3$ groups, the following decomposition of the observed preference matrix is valid:
\[
\begin{pmatrix}
    - & 0.9 & 0.9 & 0.9 \\[0.5em]
    0.1 & - & 0.5 & 0.9 \\[0.5em]
    0.1 & 0.5 & - & 0.9 \\[0.5em]
    0.1 & 0.1 & 0.1 & - 
\end{pmatrix}
=
0.5 \,
\begin{pmatrix}
    - & 1 & 1 & 1 \\[0.5em]
    0 & - & 1 & 1 \\[0.5em]
    0 & 0 & - & 1 \\[0.5em]
    0 & 0 & 0 & -
\end{pmatrix}
+
0.4 \,
\begin{pmatrix}
    - & 1 & 1 & 1 \\[0.5em]
    0 & - & 0 & 1 \\[0.5em]
    0 & 1 & - & 1 \\[0.5em]
    0 & 0 & 0 & -
\end{pmatrix}
+
0.1 \,
\begin{pmatrix}
    - & 0 & 0 & 0 \\[0.5em]
    1 & - & 0 & 0 \\[0.5em]
    1 & 1 & - & 0 \\[0.5em]
    1 & 1 & 1 & -
\end{pmatrix}.
\]

A possible choice of rankings for the three groups is:
\[
1 \succ 2 \succ 3 \succ 4, 
\qquad
1 \succ 3 \succ 2 \succ 4,
\qquad
4 \succ 3 \succ 2 \succ 1.
\]

Each group-specific ranking attains an {\rm LOP} value of $6$. Since $\binom{4}{2} = 6$ is the maximum possible score in the classical {\rm LOP} with four items, their weighted combination necessarily achieves the global optimum. Therefore, using the permutations induced by the three chosen rankings, together with the matrices in the decomposition and their associated weights, provides an optimal solution to $\mathrm{MLOP}_g$ for any $g \ge 3$.
\end{example}

\section{Mixed-integer linear programming formulations} \label{sec:formulation}

In this section we develop mixed-integer programming formulations for the Mixture Linear Ordering Problem. We first introduce an extended formulation that explicitly models the latent preference structure associated with each group, providing a direct mathematical representation of the problem definition. We then derive a compact reformulation that eliminates the latent matrices and leads to a linearised model suitable for computation. Finally, exploiting the resulting absolute-value interpretation, we establish structural properties of the problem and derive bounds on the number of groups that may be active at optimality.

\subsection{Extended formulation with latent preference matrices}

We now introduce an extended formulation for the Mixture Linear Ordering Problem that closely follows the classical formulation of the Linear Ordering Problem presented in the previous section. The key idea is to associate with each group its own ranking variables together with a latent preference matrix whose weighted combination reproduces the observed preference matrix.

As established earlier for the normalized preference matrix, we assume $c_{rs} + c_{sr} = 1$ for all distinct $r,s \in [[n]]$, leaving the elements of the main diagonal undefined. To formulate this extended model, we introduce the following decision variables:
\[
x_{rs}^i =
\begin{cases}
1, & \text{if item } r \text{ precedes item } s \text{ in the ranking of group } i,\\
0, & \text{otherwise},
\end{cases}
\qquad
\forall\, r,s\in[[n]],\ i\in[[g]] : r\neq s,
\]
\[
y_{rs}^i = \text{portion of } c_{rs} \text{ explained by group } i,
\qquad
\forall\, r,s\in[[n]],\ i\in[[g]] : r\neq s,
\]
\[
\omega^i = \text{weight associated with group } i,
\qquad
\forall i\in[[g]].
\]

The resulting formulation is
\begin{subequations}\label{model:MLOP}
\begin{align}
\max \quad 
& \sum_{r=1}^n \sum_{\substack{s=1\\ r\neq s}}^n \sum_{i=1}^g
y_{rs}^i\,x_{rs}^i
\label{objMLOP}
\\
\text{s.t.}\quad
& x_{rs}^i+x_{sr}^i=1
&& \forall r,s\in[[n]],\ i\in[[g]] : r<s,
\label{cons:pairsMLOP}
\\
& x_{rs}^i+x_{st}^i+x_{tr}^i \le 2
&& \forall r,s,t\in[[n]],\ i\in[[g]] : r<s,\ r<t,\ s\neq t,
\label{cons:transMLOP}
\\
& \sum_{i=1}^g y_{rs}^i = c_{rs}
&& \forall r,s\in[[n]] : r\neq s,
\label{cons:sumcMLOP}
\\
& y_{rs}^i + y_{sr}^i = \omega^i
&& \forall r,s\in[[n]],\ i\in[[g]] : r<s,
\label{cons:pairweightMLOP}
\\
& x_{rs}^i \in \{0,1\}
&& \forall r,s\in[[n]],\ i\in[[g]] : r\neq s,
\label{cons:binaryMLOP}
\\
& y_{rs}^i \ge 0
&& \forall r,s\in[[n]],\ i\in[[g]] : r\neq s,
\label{cons:yposMLOP}
\\
& \omega^i \ge 0
&& \forall i\in[[g]].
\label{cons:wposMLOP}
\end{align}
\end{subequations}

Constraints \eqref{cons:pairsMLOP}--\eqref{cons:transMLOP} ensure that the variables $\bm x^i$ define a linear order for each group. Constraint \eqref{cons:sumcMLOP} guarantees that the sum of the group-specific preference matrices reproduces the observed preference matrix $C$. Finally, constraints \eqref{cons:pairweightMLOP} ensure that each group corresponds to a fraction $\omega^i$ of the population.

Although the weights $\omega^i$ do not appear explicitly in the objective function \eqref{objMLOP}, they are implicitly encoded in the variables $y_{rs}^i$. Furthermore, while the normalisation condition $\sum_{i=1}^g \omega^i = 1$ is not explicitly formulated, we will show in Proposition~\ref{prop:MLOP-feas} that it is naturally enforced by the model constraints. The following proposition shows that formulation \eqref{model:MLOP} correctly models $\mathrm{MLOP}_g$.

\begin{prop}\label{prop:MLOP-feas}
Let $(\bm x,\bm y,\bm\omega)$ be a feasible solution of \eqref{model:MLOP}. Then it induces a feasible solution $(\sigma^i,C^i,\omega^i)_{i\in[[g]]}$ of $\mathrm{MLOP}_g$.
\end{prop}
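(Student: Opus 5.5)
The plan is to read each component of the MLOP solution directly off $(\bm x,\bm y,\bm\omega)$ and then check the three requirements in the definition of $\mathrm{MLOP}_g$: each $\sigma^i$ is a linear order, the weights are nonnegative and sum to one, and each $C^i$ is a normalised matrix satisfying the mixture constraint. I will also check that the objective values agree, since the proposition is meant to justify formulation \eqref{model:MLOP}.

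\emph{Rankings.} For each $i\in[[g]]$, constraints \eqref{cons:pairsMLOP}, \eqref{cons:transMLOP} and \eqref{cons:binaryMLOP} are exactly the LOP constraints \eqref{cons:pairsLOP}--\eqref{cons:binaryLOP} applied to the vector $\bm x^i$. As recalled after \eqref{model:LOP}, $\bm x^i$ is therefore the incidence vector of a linear order. Let $\sigma^i$ be that order, so $r\succ s$ in $\sigma^i$ if and only if $x^i_{rs}=1$.

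\emph{Weights.} Nonnegativity is \eqref{cons:wposMLOP}. For the normalisation, fix any pair $r<s$ (assuming $n\ge 2$). Summing \eqref{cons:pairweightMLOP} over $i$ and then using \eqref{cons:sumcMLOP} twice gives
\[
\sum_{i=1}^g\omega^i=\sum_{i=1}^g\bigl(y^i_{rs}+y^i_{sr}\bigr)=c_{rs}+c_{sr}=1 .
\]
The last equality is the normalisation assumed on $C$. This is the point where the implicit constraint $\sum_i\omega^i=1$ appears.

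\emph{Latent matrices.} Split the groups into two cases.
\begin{itemize}
\item If $\omega^i>0$, set $c^i_{rs}=y^i_{rs}/\omega^i$. This is nonnegative by \eqref{cons:yposMLOP}, and \eqref{cons:pairweightMLOP} gives $c^i_{rs}+c^i_{sr}=1$.
\item If $\omega^i=0$, then \eqref{cons:pairweightMLOP} together with $y\ge 0$ forces $y^i_{rs}=0$ for all $r\neq s$. Here choose any normalised matrix, for instance $c^i_{rs}=x^i_{rs}$.
\end{itemize}
In both cases $\omega^i c^i_{rs}=y^i_{rs}$. Hence \eqref{cons:sumcMLOP} becomes $\sum_i\omega^iC^i=C$, which is the mixture constraint.

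\emph{Objective.} For each $i$,
\[
\sum_{r\neq s} y^i_{rs}x^i_{rs}=\omega^i\sum_{r\neq s}c^i_{rs}x^i_{rs}=\omega^i f(\sigma^i,C^i).
\]
This holds because $x^i_{rs}=1$ exactly for the ordered pairs $(\sigma^i_k,\sigma^i_l)$ with $k<l$, and these are precisely the terms summed in $f(\sigma^i,C^i)$. Summing over $i$ shows that \eqref{objMLOP} equals $F$.

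The only delicate step is the degenerate case $\omega^i=0$, where $y^i/\omega^i$ is undefined. It is handled by showing that $y^i$ vanishes there and that the choice of $C^i$ is then irrelevant to both the mixture constraint and the objective.
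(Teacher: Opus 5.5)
Your proposal is correct and follows essentially the same route as the paper's proof. You extract $\sigma^i$ from $\bm x^i$, set $C^i=y^i/\omega^i$ (using $y^i=0$ when $\omega^i=0$), obtain $\sum_i\omega^i=1$ by summing \eqref{cons:pairweightMLOP} and applying \eqref{cons:sumcMLOP}, and verify $\sum_{r\neq s}y^i_{rs}x^i_{rs}=\omega^i f(\sigma^i,C^i)$; the only cosmetic differences are that you cite the validity of the LOP formulation instead of checking completeness, antisymmetry and transitivity directly, and you fix the concrete choice $c^i_{rs}=x^i_{rs}$ in the degenerate case.
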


\begin{proof}
We first show how to extract the group rankings from $\bm x$. Fix $i\in[[g]]$ and define a binary relation $\prec_i$ on $[[n]]$ by declaring that $r\prec_i s$ if and only if $x_{rs}^i=1$. Constraint \eqref{cons:pairsMLOP} guarantees completeness and antisymmetry: for every pair $r\neq s$, exactly one of $x_{rs}^i$ and $x_{sr}^i$ equals $1$, so either $r\prec_i s$ or $s\prec_i r$, but not both. Constraint \eqref{cons:transMLOP} ensures transitivity of $\prec_i$. Hence $\prec_i$ is a strict total order on $[[n]]$, and therefore it is represented by a unique permutation $\sigma^i$ such that $x_{\sigma^i_r\,\sigma^i_s}^i=1$ for all $1\le r<s\le n$.

We next construct the latent preference matrices. Consider $i\in[[g]]$ with $\omega^i>0$ and define $c_{rs}^i:=y_{rs}^i/\omega^i$ for all $r\neq s$. Since $y_{rs}^i\ge 0$, we have $c_{rs}^i\ge 0$. Moreover, by \eqref{cons:pairweightMLOP} we have $y_{rs}^i+y_{sr}^i=\omega^i$ for all $r<s$, and dividing by $\omega^i$ yields $c_{rs}^i+c_{sr}^i=1$, which in particular implies $c_{rs}^i\le 1$ and $c_{sr}^i\le 1$. Thus $C^i=(c_{rs}^i)_{n\times n}$ is a valid normalised preference matrix (leaving the elements of the main diagonal undefined). If $\omega^i=0$, then \eqref{cons:pairweightMLOP} together with the non-negativity constraints \eqref{cons:yposMLOP} forces $y_{rs}^i=0$ for all $r\neq s$, so group $i$ plays no role and we may choose an arbitrary valid preference matrix $C^i$ and linear order $\sigma^i$.

It remains to verify the mixture constraint and the normalisation of the weights. Using \eqref{cons:sumcMLOP}, for each $r\neq s$ we have $c_{rs}=\sum_{i=1}^g y_{rs}^i$. Notice that the relation $y_{rs}^i = \omega^i c_{rs}^i$ holds for all $i \in [[g]]$: if $\omega^i > 0$ it follows from our definition of $c_{rs}^i$, and if $\omega^i = 0$ it holds trivially since $y_{rs}^i = 0$. Substituting this directly into the sum gives $c_{rs}=\sum_{i=1}^{g} \omega^i\, c_{rs}^i$,
which is precisely the mixture constraint $C=\sum_{i=1}^g \omega^i C^i$ entrywise. Finally, for any fixed pair $r\neq s$, summing \eqref{cons:pairweightMLOP} over $i$ and using \eqref{cons:sumcMLOP} yields
\[
\sum_{i=1}^g \omega^i=\sum_{i=1}^g (y_{rs}^i+y_{sr}^i)=\sum_{i=1}^g y_{rs}^i+\sum_{i=1}^g y_{sr}^i=c_{rs}+c_{sr}=1,
\]
so the weights are properly normalised.

To conclude, for any group $i \in [[g]]$,
\[
\omega^i f(\sigma^i,C^i)=\omega^i \, \sum_{r=1}^n \sum_{\substack{s=1\\ r\neq s}}^n c_{rs}^i \,  x_{rs}^i=\sum_{r=1}^n \sum_{\substack{s=1\\ r\neq s}}^n \omega^i \, c_{rs}^i \, x_{rs}^i=\sum_{r=1}^n \sum_{\substack{s=1\\ r\neq s}}^n y_{rs}^i \, x_{rs}^i,
\]
where we use the identity $y_{rs}^i=\omega^i c_{rs}^i$. Summing over $i=1,\dots,g$ yields exactly \eqref{objMLOP}, and the proof is complete.
\end{proof}

\begin{remark}
    The formulation \eqref{model:MLOP} presents a large number of symmetric solutions, since the groups are unlabeled and can be permuted without affecting feasibility or objective value. To reduce this symmetry, we can impose an ordering on the group weights,
\begin{align} \label{eq:sim}
    & \omega^i \ge \omega^{i+1}
    \qquad
    && \forall i\in[[g-1]],
\end{align}
which selects a canonical representative among equivalent solutions. 
\end{remark}

The bilinear terms $x_{rs}^i y_{rs}^i$ in \eqref{objMLOP} could be linearised directly. Furthermore, one could eliminate the redundancy of the model by exploiting the relations $y_{sr}^i = \omega^i - y_{rs}^i$ and $x_{sr}^i = 1 - x_{rs}^i$ to work only with pairs $r < s$. However, we do not pursue these straightforward modifications here. Instead, in the following section, we introduce a more elegant approach to consolidate the model into a truly compact formulation, which will be the one used for our computational study.

\subsection{Compact reformulation} \label{subsec:compact_ref}

The extended formulation \eqref{model:MLOP} introduces the variables $y_{rs}^i$
to describe how the observed matrix $C$ is decomposed into group-specific
components. However, these variables are not needed explicitly. The next lemma
shows that their contribution to the objective can be written directly in terms
of the weights $\omega^i$ and the precedence variables $x_{rs}^i$.

\begin{lemma}\label{lem:minstructure}
Let $(\hat{\bm{x}}, \hat{\bm{y}}, \hat{\bm{\omega}})$ be an optimal solution of
\eqref{model:MLOP}. Then, for any $r,s \in [[n]]$ with $r \neq s$,
\[
\sum_{i=1}^g \hat{x}_{rs}^i \hat{y}_{rs}^i
=
\min\Bigl\{c_{rs},\ \sum_{i=1}^g \hat{\omega}^i \hat{x}_{rs}^i\Bigr\}.
\]
\end{lemma}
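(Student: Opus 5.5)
The plan is to fix an unordered pair $\{r,s\}$ and argue that, with $\hat{\bm x}$ and $\hat{\bm\omega}$ held fixed, the $\hat y$-variables attached to that pair must solve a small linear program. First I will prove the upper bound that holds for every feasible solution; then I will use optimality to show the bound is attained.

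\textbf{Upper bound.} Let $I=\{i:\hat x_{rs}^i=1\}$, so that $\sum_i \hat x^i_{rs}\hat y^i_{rs}=\sum_{i\in I}\hat y^i_{rs}$. Non-negativity gives $\hat y^i_{sr}\ge 0$, so \eqref{cons:pairweightMLOP} yields $\hat y^i_{rs}\le \hat\omega^i$. Hence $\sum_{i\in I}\hat y^i_{rs}\le \sum_{i\in I}\hat\omega^i=\sum_i\hat\omega^i\hat x^i_{rs}$. Non-negativity together with \eqref{cons:sumcMLOP} also gives $\sum_{i\in I}\hat y^i_{rs}\le \sum_i \hat y^i_{rs}=c_{rs}$. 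The left-hand side is therefore at most the minimum.

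\textbf{Attainment via a local exchange.} The variables $y^i_{rs},y^i_{sr}$ for this pair appear only in \eqref{cons:sumcMLOP} for $(r,s)$ and $(s,r)$, in \eqref{cons:pairweightMLOP} for this pair, and in the objective terms $y^i_{rs}x^i_{rs}+y^i_{sr}x^i_{sr}$. So I can re-optimize them independently of all other pairs. Write $J$ for the complement of $I$, so that $x^i_{sr}=1$ exactly for $i\in J$. Eliminating $y^i_{sr}=\hat\omega^i-y^i_{rs}$, the pair's contribution to the objective is
\[
\sum_{i\in I}y^i_{rs}+\sum_{i\in J}\bigl(\hat\omega^i-y^i_{rs}\bigr)=\sum_{i\in J}\hat\omega^i+\sum_{i\in I}y^i_{rs}-\sum_{i\in J}y^i_{rs}.
\]
This is subject to $0\le y^i_{rs}\le\hat\omega^i$ and $\sum_i y^i_{rs}=c_{rs}$. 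The constraint for $(s,r)$ then follows automatically because $\sum_i\hat\omega^i=1$, as shown in Proposition~\ref{prop:MLOP-feas}.

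Since $\sum_{i\in J}y^i_{rs}=c_{rs}-\sum_{i\in I}y^i_{rs}$, the contribution equals $\sum_{i\in J}\hat\omega^i-c_{rs}+2\sum_{i\in I}y^i_{rs}$. Maximizing it is therefore the same as maximizing $\sum_{i\in I}y^i_{rs}$ within the box constraints and the sum constraint.

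This maximum equals $\min\{c_{rs},\sum_{i\in I}\hat\omega^i\}$. To see that it is attained, fill the groups in $I$ greedily up to their capacities $\hat\omega^i$ until $c_{rs}$ is exhausted. Any leftover mass $c_{rs}-\sum_{i\in I}\hat\omega^i$ fits into $J$, because the total capacity of $J$ is $1-\sum_{i\in I}\hat\omega^i\ge c_{rs}-\sum_{i\in I}\hat\omega^i$.

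If $\hat{\bm y}$ did not attain this value, replacing its entries for this pair with the greedy ones would preserve feasibility and strictly increase the objective, contradicting optimality. Together with the upper bound, this gives equality.

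\textbf{Main obstacle.} The delicate step is checking that the local replacement really stays feasible. In particular, the constraint for $(s,r)$ in \eqref{cons:sumcMLOP} must hold automatically, which requires $\sum_i\hat\omega^i=1$ and $c_{rs}+c_{sr}=1$. The capacity argument for the leftover mass on $J$ also has to be verified. Everything else is bookkeeping.
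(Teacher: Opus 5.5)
Your proof is correct and follows essentially the same route as the paper's: the per-pair upper bound from non-negativity and $y^i_{rs}\le\hat\omega^i$, then a feasible reassignment of the $\{r,s\}$-block that attains it, with optimality forcing equality. The differences are minor: your greedy fill replaces the paper's proportional two-case construction, and your rewriting of the pair's contribution as $\sum_{i\in J}\hat\omega^i - c_{rs} + 2\sum_{i\in I}y^i_{rs}$ makes the exchange argument purely local, sparing you the paper's implicit check that the constructed $\tilde{\bm y}$ also attains the bound for the reversed pair $(s,r)$.
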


\begin{proof}
Fix $(\hat{\bm{x}}, \hat{\bm{\omega}})$. Observe that for any unordered pair $\{r,s\} \subseteq [[n]]$ with $r \neq s$, the corresponding block of $2g$ variables $\{y_{rs}^i, y_{sr}^i\}, {i \in [[g]]}, $ only appears in constraints \eqref{cons:sumcMLOP}, \eqref{cons:pairweightMLOP}, and \eqref{cons:yposMLOP}. Crucially, none of these constraints link $y$-variables belonging to different unordered pairs. Therefore, once $\hat{\bm{\omega}}$ is fixed, we can reassign the values within each block $\{y_{rs}^i, y_{sr}^i\}_{i \in [[g]]}$ independently of the rest of the variables. As long as this local reassignment satisfies \eqref{cons:sumcMLOP}-\eqref{cons:yposMLOP} for the specific pair $\{r,s\}$, the overall feasibility of the solution is preserved.

Now, fix an unordered pair $\{r,s\}$ with $r \neq s$. Since $\sum_{i=1}^g y_{rs}^i = c_{rs}$ and
$0 \le y_{rs}^i \le y_{rs}^i + y_{sr}^i = \hat{\omega}^i$ for all $i$, it follows that any feasible assignment for $\bm{y}$
satisfies
\[
\sum_{i=1}^g \hat{x}_{rs}^i y_{rs}^i
= \sum_{i:\hat{x}_{rs}^i=1} y_{rs}^i
\le \min\Bigl\{\sum_{i=1}^g y_{rs}^i,\ \sum_{i:\hat{x}_{rs}^i=1} \hat{\omega}^i\Bigr\}
= \min\Bigl\{c_{rs},\ \sum_{i=1}^g \hat{\omega}^i \hat{x}_{rs}^i\Bigr\}.
\]
Consequently, this establishes the right-hand side as an upper bound. Next, we explicitly construct a feasible assignment for $\{y_{rs}^i, y_{sr}^i\}_{i \in [[g]]}$ to show that this bound is always attained.

If $c_{rs}=0$, then $\sum_{i=1}^g \hat{y}_{rs}^i = c_{rs} = 0$ and $\hat{y}_{rs}^i \ge 0$ imply
$\hat{y}_{rs}^i = 0$ for all $i$, and thus
\[
\sum_{i=1}^g \hat{x}_{rs}^i \hat{y}_{rs}^i = 0
= \min\Bigl\{c_{rs},\ \sum_{i=1}^g \hat{\omega}^i \hat{x}_{rs}^i\Bigr\}.
\]
So assume $c_{rs} > 0$ in what follows and define
\[
I_0(r,s) := \{i \in [[g]] : \hat{x}_{rs}^i = 0\}, \qquad I_1(r,s) := [[g]] \setminus I_0(r,s) = \{i \in [[g]] : \hat{x}_{rs}^i = 1\}.
\]

\smallskip
\noindent\textbf{Case 1:} $c_{rs} \le \sum_{i \in I_1(r,s)} \hat{\omega}^i$.
Set $\alpha := c_{rs} / \sum_{i \in I_1(r,s)} \hat{\omega}^i$ and define
\[
\tilde{y}_{rs}^i :=
\begin{cases}
0 & \text{if } i \in I_0(r,s), \\
\alpha\,\hat{\omega}^i & \text{if } i \in I_1(r,s),
\end{cases}
\qquad
\tilde{y}_{rs}^i := \hat{\omega}^i - \tilde{y}_{rs}^i \quad \forall i \in [[g]].
\]
Then, $0 \le \tilde{y}_{rs}^i \le \hat{\omega}^i$ and $\tilde{y}_{rs}^i + \tilde{y}_{sr}^i = \hat{\omega}^i$ for all $i$.
Moreover,
\[
\sum_{i=1}^g \tilde{y}_{rs}^i = \sum_{i \in I_1(r,s)} \tilde{y}_{rs}^i = \sum_{i \in I_1(r,s)} \alpha \, \hat{\omega}^i
= \alpha \sum_{i \in I_1(r,s)} \hat{\omega}^i = c_{rs},
\]
and therefore $\sum_{i=1}^g \tilde{y}_{rs}^i = \sum_{i=1}^g \hat{\omega}^i - \sum_{i=1}^g \tilde{y}_{rs}^i = 1 - c_{rs} = c_{sr}$.
Finally,
\[
\sum_{i=1}^g \hat{x}_{rs}^i \tilde{y}_{rs}^i
= \sum_{i \in I_1(r,s)} \tilde{y}_{rs}^i
= \sum_{i=1}^g \tilde{y}_{rs}^i
= c_{rs}
= \min\Bigl\{c_{rs}, \sum_{i=1}^g \hat{\omega}^i \hat{x}_{rs}^i\Bigr\}.
\]

\smallskip
\noindent\textbf{Case 2:} $c_{rs} > \sum_{i \in I_1(r,s)} \hat{\omega}^i$.
Since the preference matrix is normalised and the weights satisfy $\sum_{i=1}^g \hat{\omega}^i = 1$, we have $c_{sr} = 1 - c_{rs} < 1 - \sum_{i \in I_1(r,s)} \hat{\omega}^i = \sum_{i \in I_0(r,s)} \hat{\omega}^i$. Therefore the condition of Case~1 holds for the ordered pair $(s,r)$. Applying the construction of Case~1 to $(s,r)$, we obtain a feasible assignment for the block $\{\tilde{y}_{rs}^i, \tilde{y}_{rs}^i\}_{i \in [[g]]}$
such that $\tilde{y}_{rs}^i = 0$ for all $i \in I_0(s,r) = I_1(r,s)$. Hence, using $\tilde{y}_{rs}^i = \hat{\omega}^i - \tilde{y}_{sr}^i$, we obtain
\[
\sum_{i=1}^g \hat{x}_{rs}^i \, \tilde{y}_{rs}^i
= \sum_{i \in I_1(r,s)} (\hat{\omega}^i - \tilde{y}_{sr}^i)
= \sum_{i \in I_1(r,s)} \hat{\omega}^i
= \sum_{i=1}^g \hat{\omega}^i \, \hat{x}_{rs}^i
= \min\Bigl\{c_{rs}, \sum_{i=1}^g \hat{\omega}^i \, \hat{x}_{rs}^i\Bigr\}.
\]

Applying the above construction independently to all unordered pairs $\{r,s\}$ such that $r < s$ yields a feasible
$\tilde{\bm{y}}$ attaining the upper bound for every $r \neq s$. Since this bound holds for any feasible solution, and the objective function of \eqref{model:MLOP} maximizes the sum $\sum_{r \neq s} \sum_{i=1}^g \hat{x}_{rs}^i \, y_{rs}^i$, optimality implies that $\hat{\bm{y}}$ must also attain this maximum possible value. The claim follows.
\end{proof}

Applying Lemma~\ref{lem:minstructure} entrywise to model~\eqref{model:MLOP} allows the elimination of the variables $y_{rs}^i$, leading to the equivalent formulation:
\begin{subequations}\label{model:MLOP2}
\begin{align}
\max \quad
& \sum_{r=1}^n \sum_{\substack{s=1 \\ r\neq s}}^n
\min\Bigl\{c_{rs},\ \sum_{i=1}^g \omega^i\,x_{rs}^i\Bigr\}
\label{obj2}\\
\text{s.t.}\quad
& \eqref{cons:pairsMLOP}, \eqref{cons:transMLOP} \\
& \sum_{i=1}^g \omega^i = 1,
\label{cons:weightsum2}\\
& x_{rs}^i \in \{0,1\}
&& \forall r,s\in[[n]],\ \forall i\in[[g]]: r\neq s,
\label{cons:binary2}\\
& \omega^i \ge 0
&& \forall i\in[[g]].
\label{cons:wpos2}
\end{align}
\end{subequations}
Note that the normalization of weights \eqref{cons:weightsum2}, which was implicitly enforced by the constraints on $y_{rs}^i$ in the original model \eqref{model:MLOP}, must now be explicitly included as those variables have been removed. 

For distinct \(r,s \in [[n]]\), using \eqref{cons:pairsMLOP} and \eqref{cons:weightsum2}, we have \(\sum_{i=1}^g \omega^i x_{sr}^i = \sum_{i=1}^g \omega^i(1 - x_{rs}^i) = 1-\sum_{i=1}^g \omega^i x_{rs}^i\). Furthermore, since \(C\) is normalised, \(c_{sr}=1-c_{rs}\). These relations allow us to simplify the objective function by grouping pairs of items:
\begin{align*}
\min\Bigl\{c_{rs},\sum_{i=1}^g \omega^i x_{rs}^i\Bigr\} + \min\Bigl\{c_{sr},\sum_{i=1}^g \omega^i x_{sr}^i\Bigr\}
&= \min\Bigl\{c_{rs},\sum_{i=1}^g \omega^i x_{rs}^i\Bigr\} + \min\Bigl\{1-c_{rs}, 1-\sum_{i=1}^g \omega^i x_{rs}^i\Bigr\} \\
&= \min\Bigl\{c_{rs},\sum_{i=1}^g \omega^i x_{rs}^i\Bigr\} + 1 - \max\Bigl\{c_{rs},\sum_{i=1}^g \omega^i x_{rs}^i\Bigr\} \\
&= 1 - \Bigl|c_{rs}-\sum_{i=1}^g \omega^i x_{rs}^i\Bigr|.
\end{align*}

As mentioned in Section~2, we can eliminate the redundancy of the pairwise comparisons by focusing exclusively on the variables $x_{rs}^i$ with $r < s$. By applying this reduction and substituting the absolute value identity into the objective function, model \eqref{model:MLOP2} can be rewritten in its definitive form:
\begin{subequations}\label{model:MLOP3}
\begin{align}
\max \quad
& \sum_{r=1}^{n-1} \sum_{s=r+1}^n
\Bigl(1-\Bigl|c_{rs}-\sum_{i=1}^g \omega^i x_{rs}^i\Bigr|\Bigr)
\label{obj3}\\
\text{s.t.}\quad
& x_{rs}^i + x_{st}^i - x_{rt}^i \le 1
&& \forall r,s,t\in[[n]],\ i\in[[g]] : r<s<t,
\label{cons:transitivity3_1}\\
& x_{rs}^i + x_{st}^i - x_{rt}^i \ge 0
&& \forall r,s,t\in[[n]],\ i\in[[g]] : r<s<t,
\label{cons:transitivity3_2}\\
& \sum_{i=1}^g \omega^i = 1,
\label{cons:weightsum3}\\
& x_{rs}^i \in \{0,1\}
&& \forall r,s \in [[n]],\ i\in[[g]] : r<s,
\label{cons:binary3}\\
& \omega^i \ge 0
&& \forall i\in[[g]].
\label{cons:wpos3}
\end{align}
\end{subequations}
By ignoring the constant term $\binom{n}{2}$ and changing the sign of the objective, the problem is equivalent to minimizing the $L_1$ distance between the empirical preferences and the weighted mixture of rankings:
\begin{subequations}\label{model:MLOP_L1}
\begin{align}
\min \quad
& \sum_{r=1}^{n-1} \sum_{s=r+1}^n
\Bigl|c_{rs}-\sum_{i=1}^g \omega^i x_{rs}^i\Bigr|
\label{obj_L1}\\
\text{s.t.}\quad
& \eqref{cons:transitivity3_1}, \eqref{cons:transitivity3_2}, \eqref{cons:weightsum3} \nonumber \\
& x_{rs}^i \in \{0,1\}
&& \forall r,s \in [[n]],\ i\in[[g]] : r<s,\\
& \omega^i \ge 0
&& \forall i\in[[g]].
\end{align}
\end{subequations}

We next linearise the nonlinear terms. Introduce variables
$u_{rs}^i=\omega^i x_{rs}^i$ and impose
\begin{subequations}
\begin{align}
u_{rs}^i \le \omega^i
&& \forall r,s\in[[n]],\ \forall i\in[[g]]: r < s, \label{con:u1}\\
u_{rs}^i \le x_{rs}^i
&& \forall r,s\in[[n]],\ \forall i\in[[g]]: r < s,\label{con:u2}\\
u_{rs}^i \ge \omega^i + x_{rs}^i - 1
&& \forall r,s\in[[n]],\ \forall i\in[[g]]: r < s, \label{con:u3}\\
 u_{rs}^i \geq 0
&& \forall r,s\in[[n]],\ \forall i\in[[g]]: r < s. \label{con:u4}
\end{align}
\end{subequations}
Introduce variables $v_{rs}=|c_{rs}-\sum_{i=1}^g u_{rs}^i|$ for $r<s$ and impose
\begin{subequations}
\begin{align}
v_{rs} \ge c_{rs}-\sum_{i=1}^g u_{rs}^i
&& \forall r,s\in[[n]]: r<s, \label{con:v1}\\
v_{rs} \ge \sum_{i=1}^g u_{rs}^i-c_{rs}
&& \forall r,s\in[[n]]: r<s. \label{con:v2}
\end{align}
\end{subequations}

The resulting mixed-integer linear formulation is
\begin{subequations}\label{model:MLOP4}
\begin{align}
\min \quad
& \sum_{r=1}^n \sum_{s=r+1}^n v_{rs}
\label{obj4}\\
\text{s.t.}\quad
& \eqref{cons:transitivity3_1}, \eqref{cons:transitivity3_2}, \eqref{cons:weightsum3}, \eqref{con:u1}, \eqref{con:u2}, \eqref{con:u3}, \eqref{con:u4}, \eqref{con:v1}, \eqref{con:v2} \nonumber\\
& x_{rs}^i \in \{0,1\}
&& \forall r,s\in[[n]],\ \forall i\in[[g]]: r < s,\\
& \omega^i \ge 0
&& \forall i\in[[g]].\\
\end{align}
\end{subequations}

In the following section, we leverage this compact formulation to develop an elegant geometric interpretation of the problem. This perspective allows us to establish a strict theoretical upper bound on the number of groups $g$ required to reach a globally optimal solution. While this geometric analysis provides deep structural insights, it is largely self-contained. Consequently, readers whose primary interest lies in the practical resolution of the model may safely skip to Section~\ref{sec:heuristic}, where we introduce our heuristic approach, without losing the main thread of the paper.

\subsection{Geometric interpretation} \label{subsec:geo}

For each pair $(r,s)$ with $r<s$, the objective function of \eqref{model:MLOP_L1} depends exclusively on the convex combination $\sum_{i=1}^g \omega^i x_{rs}^i$. This structure allows a geometric interpretation of the model. Let $\mathcal L \subseteq \{0,1\}^{\binom{n}{2}}$ denote the set of incidence vectors $\bm x=(x_{rs})_{1\le r<s\le n}$ associated with all valid linear orders of $[[n]]$. The convex hull of this set,
\[
\mathcal P := \operatorname{conv}(\mathcal L),
\]
is the \emph{projected linear ordering polytope}, which is full-dimensional in $\mathbb{R}^{\binom{n}{2}}$ (see, e.g., \cite{marti2022exact}). 

For a fixed number of groups $g \ge 1$, we define the restricted set of points that can be expressed as a convex combination of at most $g$ linear orders:
\[
\mathcal P_g:=\Bigl\{\sum_{i=1}^g \omega^i \bm x^i:\ \bm x^i\in\mathcal L,\
\omega^i\ge0,\ \sum_{i=1}^g\omega^i=1\Bigr\}.
\]
From this viewpoint, solving problem \eqref{model:MLOP_L1} geometrically amounts to finding the closest point to the observed preference data $\bm c$ under the $\ell_1$ norm within $\mathcal P_g$.

\begin{prop}\label{prop:geometry_Pg}
Let $\bm c=(c_{rs})_{1\le r<s\le n}\in\mathbb{R}^{\binom{n}{2}}$ be the vector formed by the upper triangular entries of the preference matrix $C$. For any $g \ge 1$, the optimal value of \eqref{model:MLOP_L1} satisfies
\[
\mathrm{OPT}_g = \min_{\bm p\in\mathcal P_g}\|\bm p-\bm c\|_1.
\]
\end{prop}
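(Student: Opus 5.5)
The identity follows by showing that the feasible set of \eqref{model:MLOP_L1} and the set $\mathcal P_g$ are linked by a surjective map that preserves the objective. I will prove two inequalities, each by explicit transfer of solutions.

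The first step is to identify the feasible rankings with the points of $\mathcal L$. For each $i\in[[g]]$, the binary vector $\bm x^i=(x_{rs}^i)_{r<s}$ satisfies the transitivity constraints \eqref{cons:transitivity3_1}--\eqref{cons:transitivity3_2}. I will argue that these constraints, together with the implicit completion $x_{sr}^i:=1-x_{rs}^i$, are exactly the compact version of \eqref{cons:pairsLOP}--\eqref{cons:transitivityLOP}. Hence binary solutions of \eqref{cons:transitivity3_1}--\eqref{cons:transitivity3_2} are precisely the incidence vectors of linear orders, so $\bm x^i\in\mathcal L$.

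To check this, I rewrite the 3-cycle inequalities for a triple $r<s<t$. There are only two cyclic orientations, $r\to s\to t\to r$ and $r\to t\to s\to r$. Substituting $x_{tr}=1-x_{rt}$ and similar, the first gives $x_{rs}+x_{st}-x_{rt}\le 1$ and the second gives $x_{rs}+x_{st}-x_{rt}\ge 0$. This is the one step needing care, since the statement silently relies on the standard equivalence of the two LOP formulations mentioned in Section~2. I will state it briefly and verify the two cyclic orientations.

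Given this identification, take any feasible $(\bm x,\bm\omega)$ of \eqref{model:MLOP_L1} and set $\bm p:=\sum_i\omega^i\bm x^i$. By \eqref{cons:weightsum3} and $\omega^i\ge0$, this point lies in $\mathcal P_g$. Its objective value is exactly $\sum_{r<s}|c_{rs}-p_{rs}|=\|\bm p-\bm c\|_1$, so $\mathrm{OPT}_g\ge\inf_{\mathcal P_g}\|\bm p-\bm c\|_1$. Conversely, any $\bm p\in\mathcal P_g$ comes with a representation $\bm p=\sum_{i=1}^g\omega^i\bm x^i$ with $\bm x^i\in\mathcal L$. This gives feasible $x_{rs}^i$ and weights for \eqref{model:MLOP_L1} with the same value, yielding the reverse inequality. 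If fewer than $g$ distinct orders are used, I repeat an order or use zero weights, as in Lemma~\ref{lem:creciente}.

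Finally, I must justify that the minimum on the right is attained, so that writing $\min$ is legitimate. The set $\mathcal P_g$ is the image of the compact set $\mathcal L^g\times\Delta_g$ under a continuous map, where $\Delta_g$ is the simplex. It is therefore compact, being a finite union of simplices' continuous images. The norm is continuous, so the minimum exists. By the equality just shown, $\mathrm{OPT}_g$ is attained as well, and this completes the argument.
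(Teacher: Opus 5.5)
Your proposal is correct and follows the same route as the paper. In one direction you map a feasible $(\bm x,\bm\omega)$ to $\bm p=\sum_{i}\omega^i\bm x^i\in\mathcal P_g$ with identical objective value, and in the other you lift any representation of a point of $\mathcal P_g$ to a feasible solution. You also add two checks the paper's proof takes for granted, and both are sound: that the compact transitivity constraints (together with $x_{sr}^i=1-x_{rs}^i$) cut out exactly $\mathcal L$ among binary vectors, and that the minimum is attained because $\mathcal P_g$ is compact.
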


\begin{proof}
Consider a feasible solution $(\bm x,\bm\omega)$ of \eqref{model:MLOP_L1}. For each $i\in[[g]]$, the vector $\bm x^i=(x_{rs}^i)_{1\le r<s\le n}$ corresponds to a linear order and therefore belongs to $\mathcal L$. Hence, the convex combination $\bm p:=\sum_{i=1}^g \omega^i \bm x^i$ lies in $\mathcal P_g$. Conversely, every $\bm p\in\mathcal P_g$ arises from some feasible assignment $(\bm x,\bm\omega)$. Therefore, minimizing the objective function \eqref{obj3} over all feasible solutions is equivalent to minimizing the $\ell_1$-distance between $\bm c$ and the elements of $\mathcal P_g$.
\end{proof}

This geometric perspective is particularly useful for theoretically bounding the maximum number of groups needed to explore the entire polytope $\mathcal P$. Carath\'eodory's theorem (originally introduced by \cite{caratheodory1911variabilitatsbereich}; see also \cite[Theorem 17.1]{rockafellar1970convex}) states that any point in the convex hull of a set in a $d$-dimensional space can be expressed as a convex combination of at most $d+1$ points from that set. Since $\mathcal P$ lies in a space of dimension $d = \binom{n}{2}$, Carath\'eodory's theorem implies that $\mathcal P_g = \mathcal P$ for any $g \ge \binom{n}{2} + 1$. This naturally leads to the following theoretical bound:

\begin{coro}\label{cor:geometry_full}
For any $g\ge\binom{n}{2}+1$, the optimal value of \eqref{model:MLOP_L1} satisfies
\[
\mathrm{OPT}_g = \min_{\bm p\in\mathcal P}\|\bm p-\bm c\|_1.
\]
\end{coro}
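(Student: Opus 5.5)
The plan is to combine Proposition~\ref{prop:geometry_Pg} with the set identity $\mathcal P_g=\mathcal P$ for $g\ge\binom{n}{2}+1$. Once that identity holds, Proposition~\ref{prop:geometry_Pg} gives $\mathrm{OPT}_g=\min_{\bm p\in\mathcal P_g}\|\bm p-\bm c\|_1=\min_{\bm p\in\mathcal P}\|\bm p-\bm c\|_1$ directly. The only point of care is that the minimum over $\mathcal P$ is actually attained. This holds because $\mathcal P$ is a polytope, hence compact, and $\bm p\mapsto\|\bm p-\bm c\|_1$ is continuous.

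To establish $\mathcal P_g=\mathcal P$, I would prove the two inclusions separately.

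\textbf{The inclusion $\mathcal P_g\subseteq\mathcal P$.} This is immediate for every $g$. Each element of $\mathcal P_g$ is a convex combination of points of $\mathcal L$, and $\mathcal P=\operatorname{conv}(\mathcal L)$ is closed under such combinations.

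\textbf{The inclusion $\mathcal P\subseteq\mathcal P_g$.} This is the substantive step. I would apply Carath\'eodory's theorem in $\mathbb{R}^d$ with $d=\binom{n}{2}$. Any $\bm p\in\operatorname{conv}(\mathcal L)$ can then be written as $\bm p=\sum_{i=1}^{k}\omega^i\bm x^i$ with $k\le d+1$, $\bm x^i\in\mathcal L$, $\omega^i\ge0$ and $\sum_i\omega^i=1$.

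If $k<g$, I pad the representation up to exactly $g$ terms. The extra terms take $\omega^i=0$ and an arbitrary $\bm x^i\in\mathcal L$, for instance the identity order. This is the same device used in the proof of Lemma~\ref{lem:creciente}. The padded combination is still a convex combination of $g$ elements of $\mathcal L$, so $\bm p\in\mathcal P_g$.

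Equivalently, one could note that $\mathcal P_{d+1}=\mathcal P$ by Carath\'eodory's theorem and that the sets $\mathcal P_g$ are nested increasing in $g$. Together with the first inclusion, this forces $\mathcal P_g=\mathcal P$ for all $g\ge d+1$.

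I do not expect a genuine obstacle here. The one step needing attention is checking that Carath\'eodory's theorem applies with ambient dimension $d=\binom{n}{2}$. It does, because the vectors in $\mathcal L$ live in $\mathbb{R}^{\binom{n}{2}}$ under the reduced indexing $r<s$, and full-dimensionality is not even needed for the bound.
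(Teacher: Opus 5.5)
Your proposal is correct and follows the paper's argument. The paper likewise invokes Carath\'eodory's theorem in $\mathbb{R}^{\binom{n}{2}}$ to get $\mathcal P_g=\mathcal P$ for $g\ge\binom{n}{2}+1$, and then reads off the corollary from Proposition~\ref{prop:geometry_Pg}. Your explicit zero-weight padding (as in Lemma~\ref{lem:creciente}) and the compactness remark only make precise steps the paper leaves implicit.
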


We next illustrate the geometric interpretation of Proposition~\ref{prop:geometry_Pg} with a simple instance involving three alternatives.

\begin{example} \label{ej:geom_1}

Consider the normalized preference matrix
\[
C=
\begin{pmatrix}
- & 0.7 & 0.8\\
0.3 & - & 0.4\\
0.2 & 0.6 & -
\end{pmatrix},
\qquad 
\mathbf{c}=(c_{12},c_{13},c_{23})=(0.7,0.8,0.4)\in\mathbb{R}^3,
\]
where $\mathbf{c}$ collects the upper–triangular entries.

For $n=3$, each linear order induces a binary vector
\[
\mathbf{x}=(x_{12},x_{13},x_{23})\in\{0,1\}^3,
\qquad x_{rs}=1 \text{ if } r\succ s,
\]
so that the polytope $\mathcal P=\mathrm{conv}(\mathcal L)$ is the convex hull
of the six vertices reported in Table~\ref{tab:orders}. Geometrically,
$\mathcal P$ is a three–dimensional polytope encoding all strict rankings,
whereas $\mathbf{c}$ represents empirical pairwise intensities and typically
lies in its interior.

For $n=3$, $\mathcal P$ also admits the compact representation
\[
\mathcal P=\{(x,y,z)\in\mathbb{R}^3:\ 0\le x,y,z\le 1,\ 0\le x-y+z\le 1\},
\]
where $(x,y,z)=(x_{12},x_{13},x_{23})$.
The additional inequalities $0\le x-y+z\le 1$ encode the transitivity
constraints for the unique triple $\{1,2,3\}$.

\begin{table}[htp!]
\caption{Linear orders and their associated vertices for $n=3$.}
\centering
\small
\setlength{\tabcolsep}{6pt}
\renewcommand{\arraystretch}{1.15}
\begin{tabular}{c|cccccc}
\hline
Linear order 
& $1\succ2\succ3$ & $1\succ3\succ2$ & $2\succ1\succ3$ & $2\succ3\succ1$ & $3\succ1\succ2$ & $3\succ2\succ1$ \\
\hline
Vertex in $\mathbb{R}^3$
& $(1,1,1)$ & $(1,1,0)$ & $(0,1,1)$ & $(0,0,1)$ & $(1,0,0)$ & $(0,0,0)$ \\
\hline
\end{tabular}
\label{tab:orders}
\end{table}

Model~\eqref{model:MLOP_L1} searches for a convex combination of at most $g$
vertices of $\mathcal P$ approximating $\mathbf{c}$. As $g$ increases,
the admissible set enlarges from individual vertices to higher–dimensional
faces of the polytope. Any feasible solution $(\bm x,\bm \omega)$ induces
a point
\[
\bm p =
\left(
\sum_{i=1}^g \omega^i x_{12}^i,\;
\sum_{i=1}^g \omega^i x_{13}^i,\;
\sum_{i=1}^g \omega^i x_{23}^i
\right),
\]
which corresponds to the convex combination of the vertices associated
with the selected rankings. In this instance, the optimal solutions are
\begin{align*}
\bm p_1 &= (1,1,0),\\
\bm p_2 &= 0.7\, (1,1,0)+0.3\, (0,0,1)=(0.7,0.7,0.3),\\
\bm p_3 &= 0.4\, (1,1,1)+0.4\, (1,1,0)+0.2\, (0,0,0)=(0.8,0.8,0.4),\\
\bm p_4 &= 0.6\, (1,1,0)+0.2\, (0,0,1)+0.1\, (0,1,1)+0.1\, (1,1,1)
=(0.7,0.8,0.4)=\mathbf{c}.
\end{align*}

Thus, as illustrated in Figure~\ref{fig:MLOP-geometry}, the solution moves
from a vertex ($g=1$) to a convex combination of two vertices ($g=2$),
which lies on the segment joining them (not necessarily on an edge of $\mathcal P$).
For $g=3$, the solution belongs to the convex hull of three vertices; in this case,
the optimal solution is not unique and distinct points attain the same optimal value.
Figure~\ref{fig:MLOP-geometry} displays one such optimal solution.
Finally, for $g=4$ the model reaches the interior point $\mathbf{c}$,
exactly reproducing the data. Since $\mathbf{c}$ lies in the interior of $\mathcal P$,
its representation as a convex combination of vertices is not unique, and multiple optimal solutions arise.

\begin{figure}
\centering
\caption{Geometric evolution of the optimal solutions $\bm p_g$ in Example~\ref{ej:geom_1} as $g$ increases. 
The point $\bm c$ is shown in orange. In each panel, the vertices used in the corresponding optimal convex combination and the optimal point $\bm p_g$ are shown in purple, with $\bm p_g$ explicitly highlighted.}
\begin{subfigure}{0.48\textwidth}
    \centering
    \includegraphics[width=0.9\textwidth]{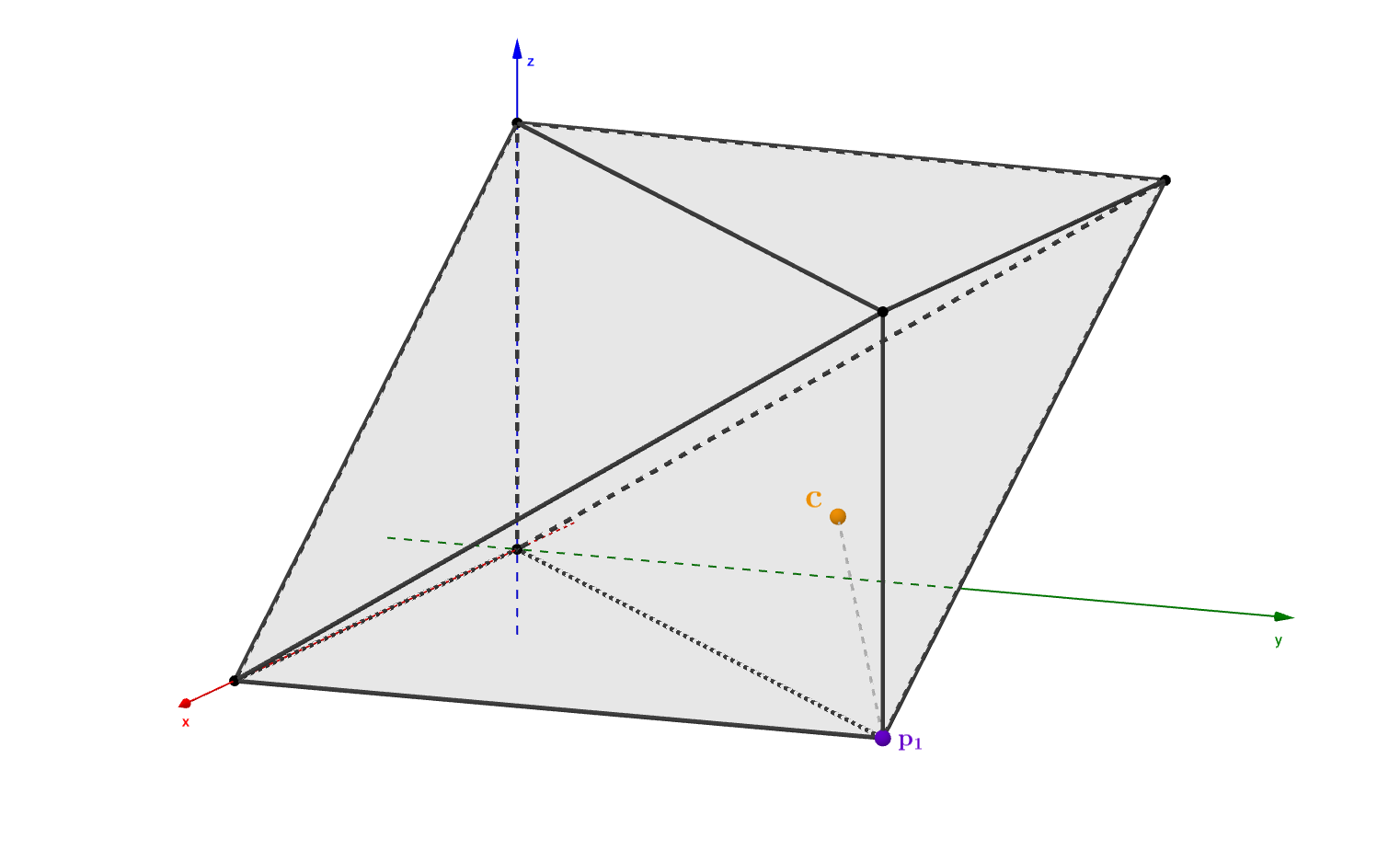}
    \caption{$g=1$}
\end{subfigure}
\hfill
\begin{subfigure}{0.48\textwidth}
    \centering
    \includegraphics[width=0.9\textwidth]{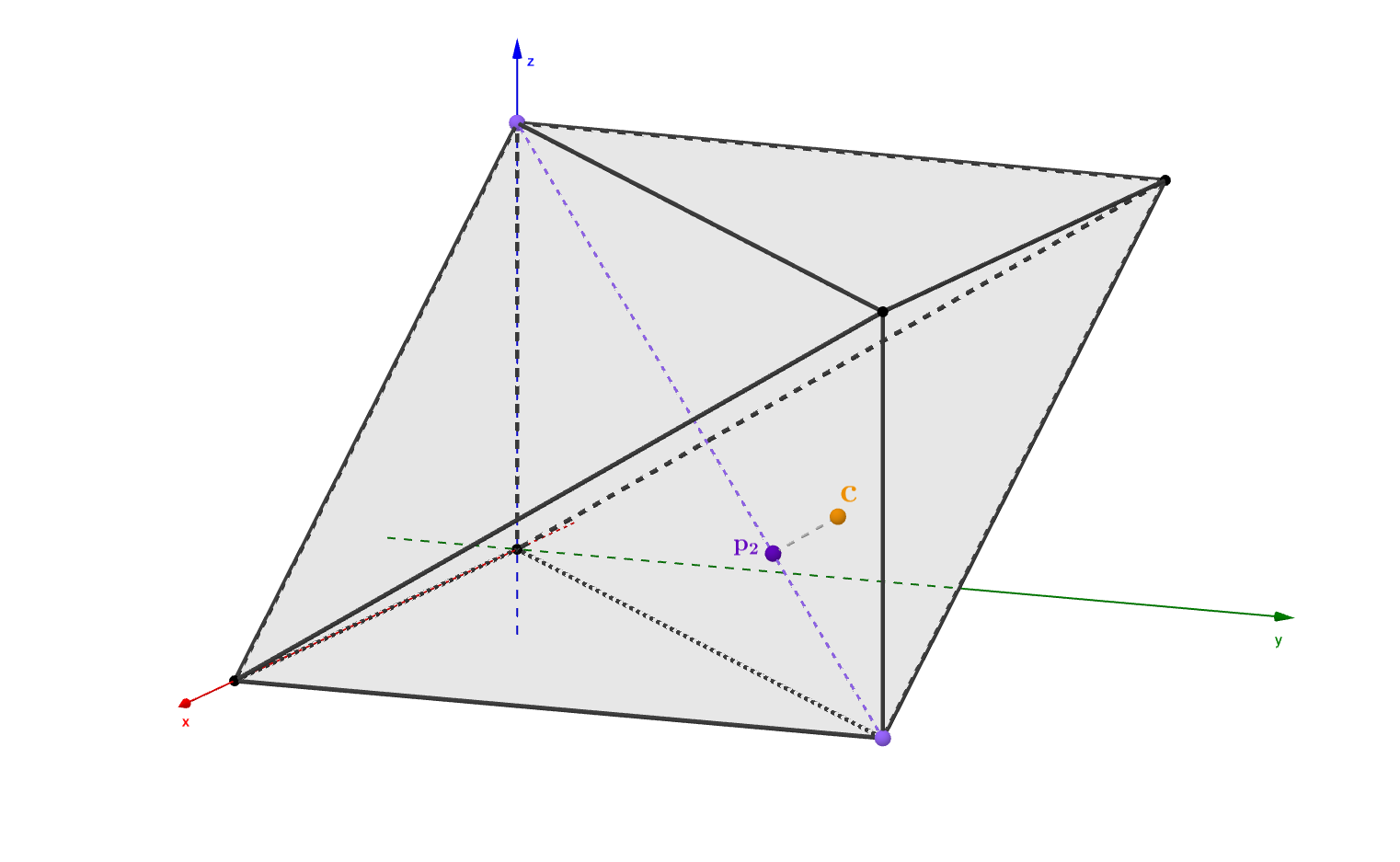}
    \caption{$g=2$}
\end{subfigure}

\medskip

\begin{subfigure}{0.48\textwidth}
    \centering
    \includegraphics[width=0.9\textwidth]{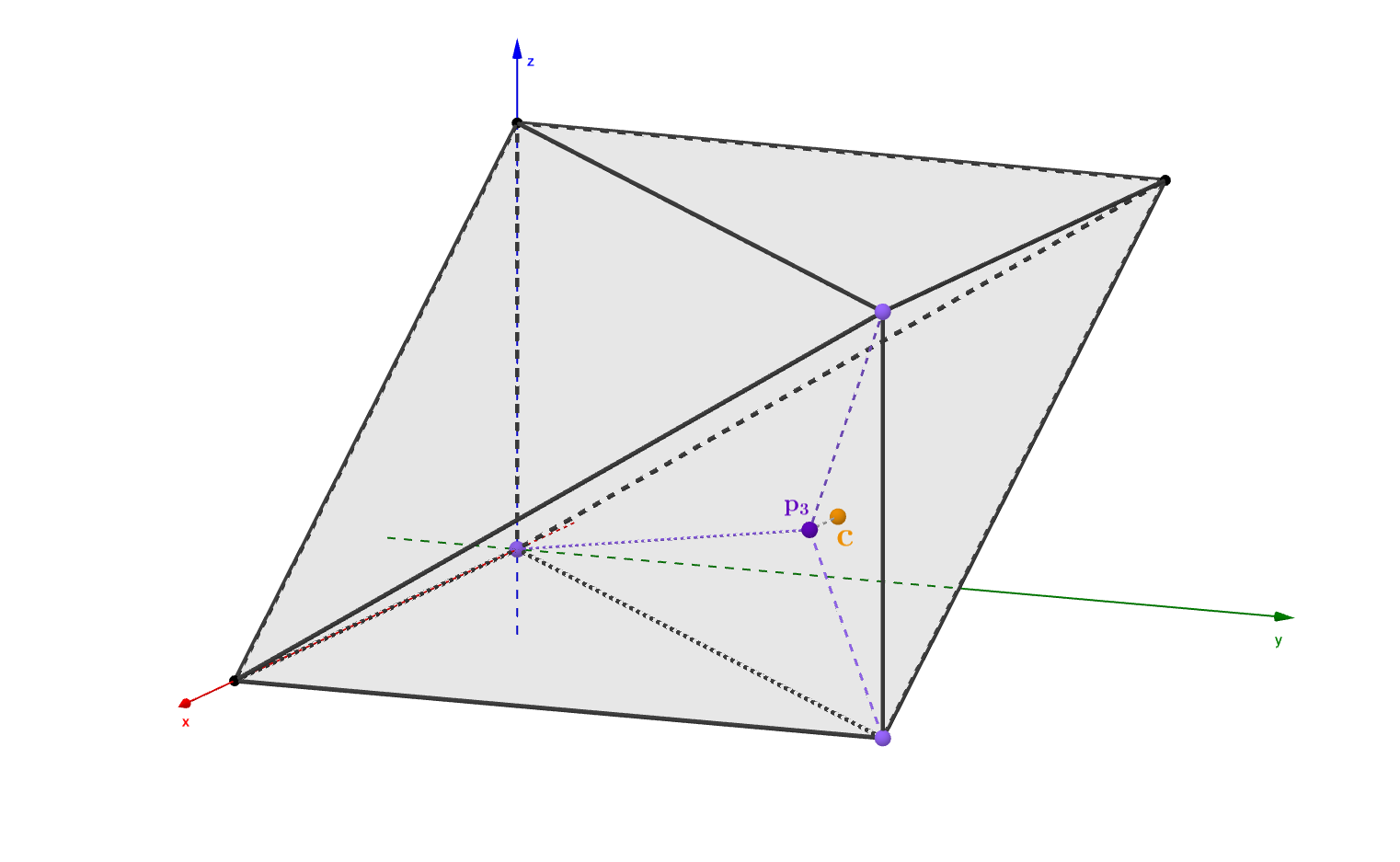}
    \caption{$g=3$}
\end{subfigure}
\hfill
\begin{subfigure}{0.48\textwidth}
    \centering
    \includegraphics[width=0.9\textwidth]{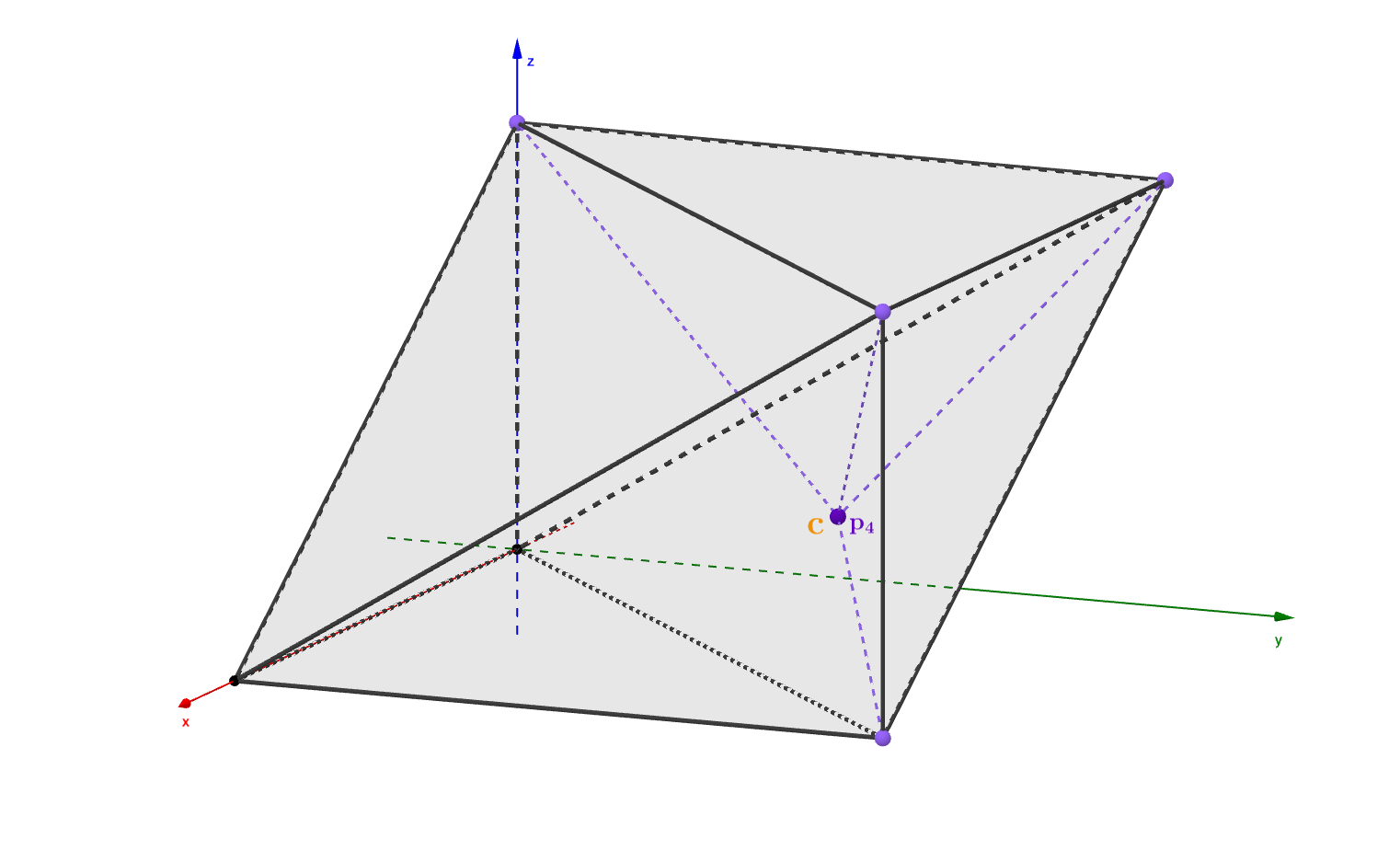}
    \caption{$g=4$}
\end{subfigure}
\label{fig:MLOP-geometry}
\end{figure}

\end{example}

If $\bm c$ does not belong to the interior of $\mathcal P$ (which occurs, for instance, when the empirical data violates a transitivity constraint), its projection lies on a proper face of the polytope. Consequently, Carath\'eodory's bound can be tightened by one.

\begin{coro}\label{cor:geometry_outside}
If $\bm c \notin \operatorname{int}\mathcal P$, then for any
$g\ge \binom{n}{2}$ the optimal value of \eqref{model:MLOP_L1} satisfies
\[
\mathrm{OPT}_g=\min_{\bm p\in\mathcal P}\|\bm p-\bm c\|_1.
\]
\end{coro}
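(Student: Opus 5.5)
The plan is to combine the geometric reformulation of Proposition~\ref{prop:geometry_Pg} with a Carath\'eodory argument applied inside a proper face of $\mathcal P$ instead of in all of $\mathbb{R}^{\binom{n}{2}}$. Write $d=\binom{n}{2}$. Since $\mathcal P_g\subseteq\mathcal P$ for every $g$, Proposition~\ref{prop:geometry_Pg} immediately gives $\mathrm{OPT}_g\ge\min_{\bm p\in\mathcal P}\|\bm p-\bm c\|_1$. For the reverse inequality it suffices to exhibit a minimizer $\bm p^*$ of $\|\bm p-\bm c\|_1$ over $\mathcal P$ that lies in $\mathcal P_d$. The sets $\mathcal P_g$ are nested in $g$, because extra groups may receive weight zero as in Lemma~\ref{lem:creciente}. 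Hence $\bm p^*\in\mathcal P_g$ for every $g\ge d$, which proves the claim.

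\textbf{Step 1: a minimizer exists on the boundary.} Since $\mathcal P$ is compact and the $\ell_1$ norm is continuous, a minimizer $\bm p^*$ exists. There are two cases.
\begin{itemize}
\item If $\bm c\in\mathcal P$, then $\bm c\in\partial\mathcal P$ by hypothesis, and we take $\bm p^*=\bm c$, which has distance $0$.
\item If $\bm c\notin\mathcal P$, then $\bm p^*\neq\bm c$. Suppose $\bm p^*\in\operatorname{int}\mathcal P$. For small $t>0$ the point $\bm p_t=\bm p^*+t(\bm c-\bm p^*)$ still lies in $\mathcal P$, and it satisfies $\|\bm p_t-\bm c\|_1=(1-t)\|\bm p^*-\bm c\|_1<\|\bm p^*-\bm c\|_1$. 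This contradicts minimality, so $\bm p^*\in\partial\mathcal P$.
\end{itemize}

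\textbf{Step 2: pass to a proper face.} Because $\mathcal P$ is a full-dimensional polytope, each boundary point lies on a supporting hyperplane $H$. Therefore $\bm p^*$ lies in the proper face $F=\mathcal P\cap H$, and $\dim F\le d-1$. A face of $\mathcal P=\operatorname{conv}(\mathcal L)$ is the convex hull of the points of $\mathcal L$ that lie in it, so $F=\operatorname{conv}(\mathcal L\cap H)$.

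\textbf{Step 3: Carath\'eodory inside the face.} Apply Carath\'eodory's theorem within the affine hull of $F$, which has dimension at most $d-1$. This expresses $\bm p^*$ as a convex combination of at most $(d-1)+1=d$ points of $\mathcal L\cap H\subseteq\mathcal L$. Hence $\bm p^*\in\mathcal P_d$, which completes the argument.

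The main obstacle is Step 1, namely guaranteeing that some optimal point lies on the boundary. The case $\bm c\in\partial\mathcal P$ is trivial. For $\bm c$ outside $\mathcal P$, the segment-shrinking argument above handles it, and it uses only that a norm scales linearly along the segment toward $\bm c$. Steps 2 and 3 are standard polyhedral facts: faces of a polytope are convex hulls of the vertices they contain, and Carath\'eodory's theorem holds in any affine subspace.
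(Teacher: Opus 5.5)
Your proposal is correct and follows the same route as the paper. You take an $\ell_1$-projection of $\bm c$ onto $\mathcal P$, show it lies on $\partial\mathcal P$ and hence in a proper face of dimension at most $\binom{n}{2}-1$, and then apply Carath\'eodory's theorem in that face's affine hull to get at most $\binom{n}{2}$ points of $\mathcal L$. Your case split ($\bm c\in\partial\mathcal P$ versus $\bm c\notin\mathcal P$), the segment-shrinking argument, and the observation that a face is the convex hull of the points of $\mathcal L$ it contains simply make explicit steps that the paper asserts without proof.
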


\begin{proof}
Let $\bm c^*$ be an $\ell_1$-projection of $\bm c$ onto $\mathcal P$.
Since $\bm c\notin\operatorname{int}\mathcal P$ and $\mathcal P$ is a polytope,
any closest point belongs to the boundary $\partial\mathcal P$ and therefore lies
in a proper face $F$ of $\mathcal P$. Hence $\dim(F)\le \binom{n}{2}-1$.

Applying Carath\'eodory's theorem in the affine hull of $F$, the point
$\bm c^*$ can be written as a convex combination of at most
$\binom{n}{2}$ vertices of $F$, which are also vertices of $\mathcal P$
and thus belong to $\mathcal L$. Consequently,
$\bm c^*\in\mathcal P_g$ for every $g\ge \binom{n}{2}$.
The result follows.
\end{proof}

We next examine a situation in which the empirical vector lies outside the linear ordering polytope. In this case, the $\ell_1$-projection of $\bm c$ onto $\mathcal P$ necessarily belongs to the boundary of the polytope and, more precisely, to a proper face. As a consequence, the number of vertices required to represent an optimal solution can be reduced by one, in accordance with Corollary~\ref{cor:geometry_outside}.

\begin{example} \label{ej:geom_2}

Consider the normalized preference matrix
\[
C=
\begin{pmatrix}
- & 0.3 & 0.9\\
0.7 & - & 0.2\\
0.1 & 0.8 & -
\end{pmatrix},
\qquad 
\mathbf{c}=(c_{12},c_{13},c_{23})=(0.3,0.9,0.2)\in\mathbb{R}^3.
\]

For $n=3$, the linear ordering polytope admits the description
\[
\mathcal P=\{(x,y,z)\in\mathbb{R}^3:\ 0\le x,y,z\le 1,\ 0\le x-y+z\le 1\}.
\]
Evaluating the transitivity expression at $\mathbf c$ yields
\[
0.3-0.9+0.2=-0.4<0,
\]
so $\mathbf c\notin\mathcal P$. Hence the model seeks the closest point of $\mathcal P$ to $\mathbf c$ in the $\ell_1$-norm.

Model~\eqref{model:MLOP_L1} searches for a convex combination of at most $g$
vertices of $\mathcal P$ minimizing the $\ell_1$-distance to $\mathbf c$.
The points $p_g$ are constructed from an optimal solution $(\bm x,\bm \omega)$
as in Example~\ref{ej:geom_1}. The optimal solutions are given by
\begin{align*}
\bm p_1 &= (1,1,0),\\
\bm p_2 &= 0.5\, (1,1,0)+0.5\, (0,1,1)=(0.5,1,0.5),\\
\bm p_3 &= 0.4\, (0,0,0)+0.3\, (0,1,1)+0.3\, (1,1,0)=(0.3,0.6,0.3).
\end{align*}

For $g=1$, the optimal solution coincides with a single vertex of $\mathcal P$, whereas for $g=2$ it lies on a segment joining two vertices. When $g=3$, the model attains the $\ell_1$-projection of $\mathbf c$ onto $\mathcal P$, which belongs to a proper face of the polytope. In this case, the optimal solution is not unique, and alternative convex combinations of three vertices achieve the same optimal value. No additional groups are required, consistently with Corollary~\ref{cor:geometry_outside}. Figure~\ref{fig:MLOP-geometry-outside} illustrates this geometric behavior.

\begin{figure}
\centering
\caption{Geometric evolution of the optimal solutions $\bm p_g$ in Example~\ref{ej:geom_2} as $g$ increases. 
The point $\bm c$ is shown in orange. In each panel, the vertices used in the corresponding optimal convex combination and the optimal point $\bm p_g$ are shown in purple, with $\bm p_g$ explicitly highlighted.}
\begin{subfigure}{0.48\textwidth}
    \centering
    \includegraphics[width=\textwidth]{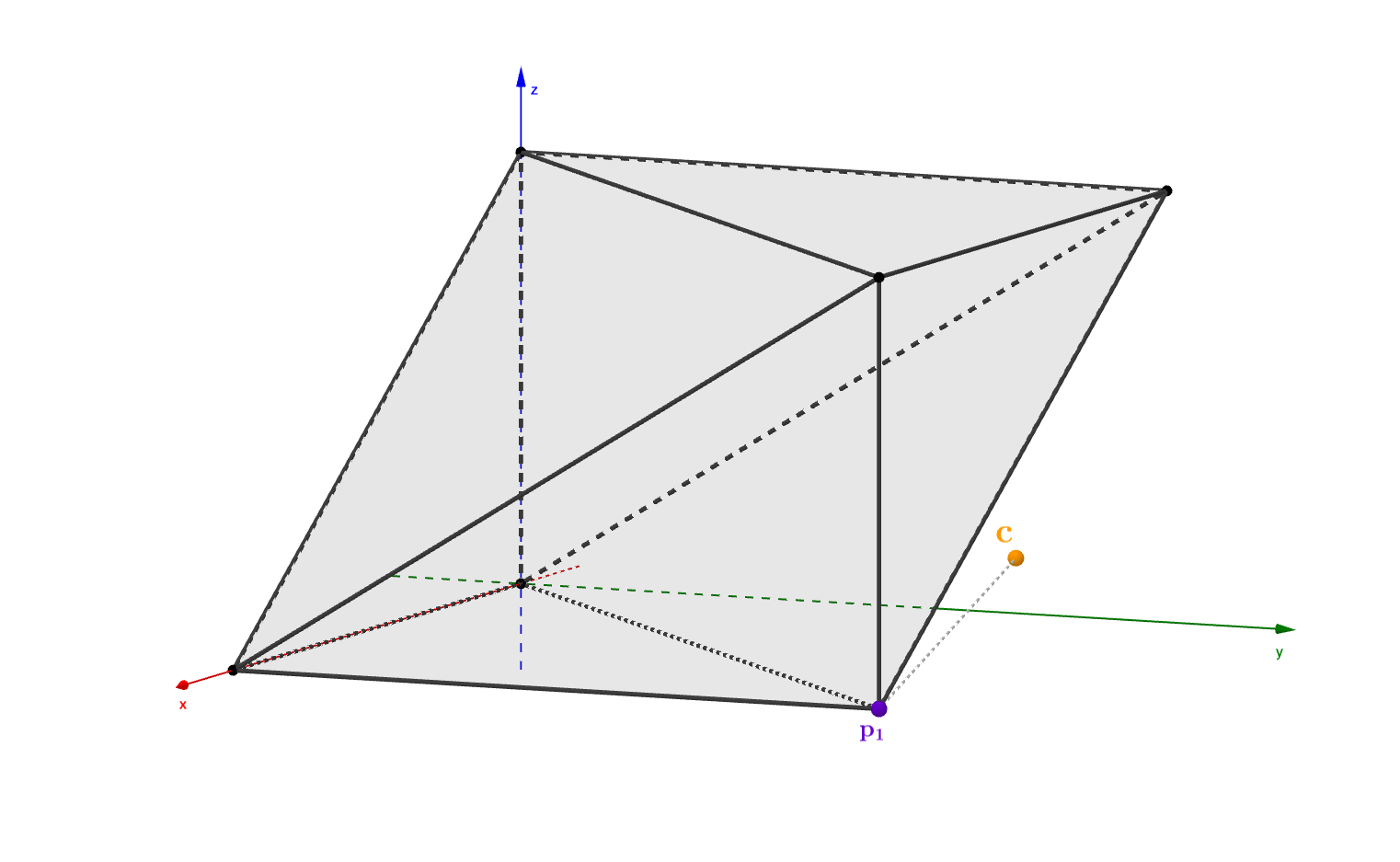}
    \caption{$g=1$}
\end{subfigure}
\hfill
\begin{subfigure}{0.48\textwidth}
    \centering
    \includegraphics[width=\textwidth]{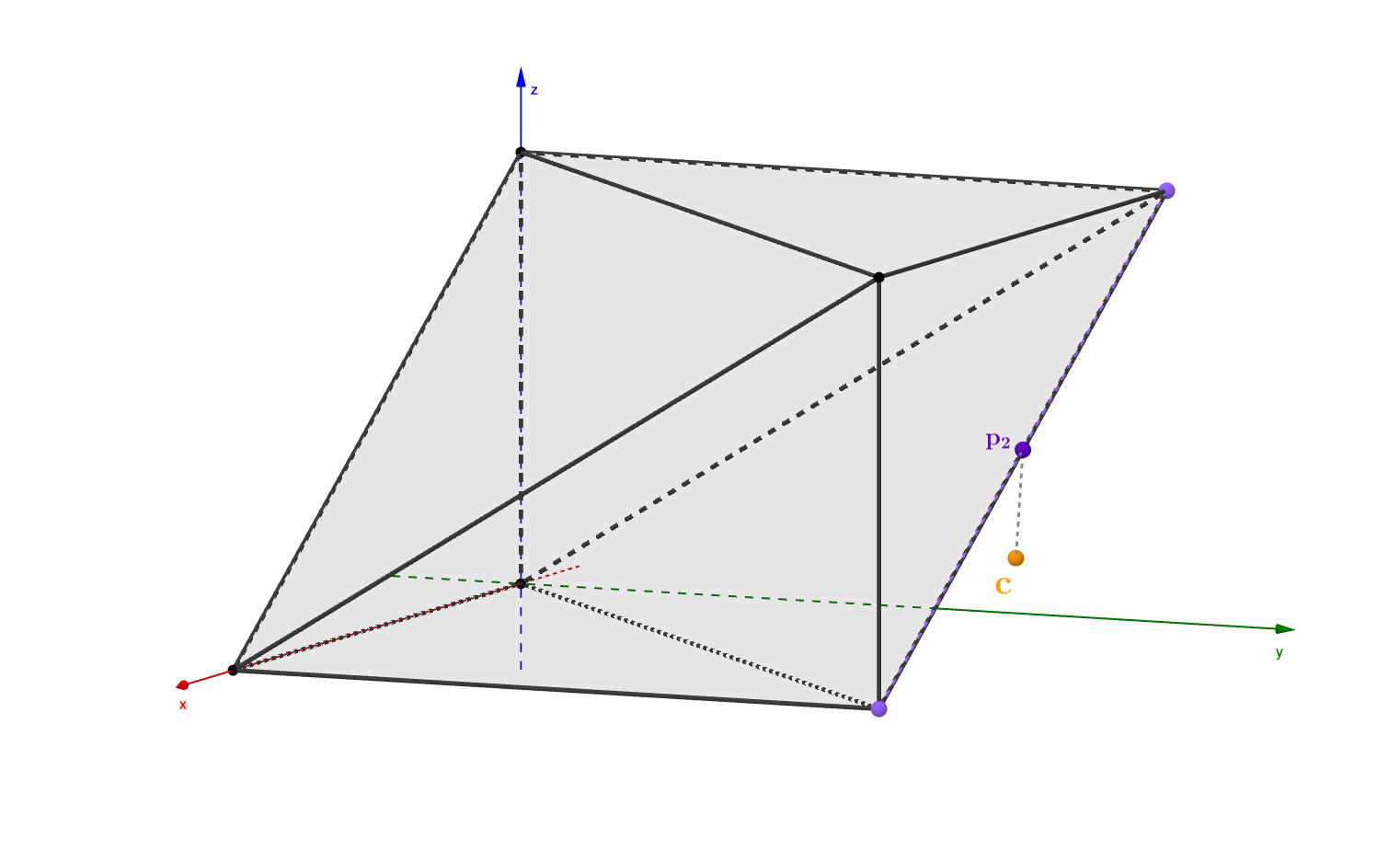}
    \caption{$g=2$}
\end{subfigure}

\medskip

\begin{subfigure}{0.48\textwidth}
    \centering
    \includegraphics[width=\textwidth]{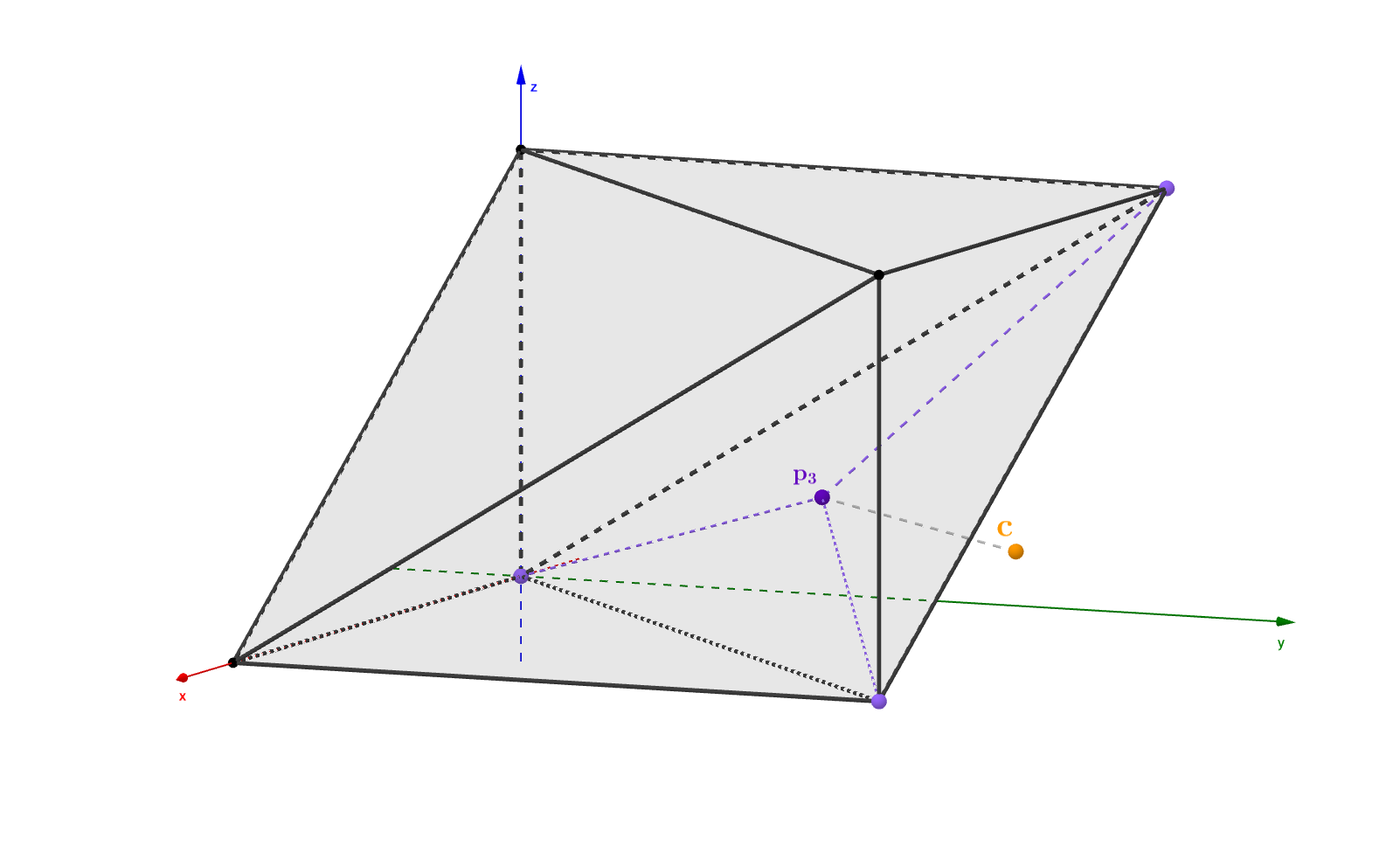}
    \caption{$g=3$}
\end{subfigure}
\label{fig:MLOP-geometry-outside}
\end{figure}

\end{example}

\section{Heuristic solution via Alternating Direction Methods} \label{sec:heuristic}

Solving ${\rm MLOP}_g$ exactly is computationally challenging. Even the classical Linear Ordering Problem (LOP), which corresponds to the case $g = 1$, is NP-hard \citep{garey1979computers}, as it requires optimizing a single linear order over $n$ items. The extension considered in this work further increases the complexity of the problem, since up to $g$ rankings must be determined simultaneously, alongside their associated population weights and latent preference structures. Consequently, computing optimal solutions to ${\rm MLOP}_g$ becomes intractable for all but the smallest instances, motivating the development of heuristic approaches capable of producing high-quality solutions within reasonable computational timeframes.

The solution approach developed in this section is an alternating-direction heuristic. More precisely, it can be characterized as a \textit{matheuristic}, since each iteration solves optimization subproblems over a subset of decision variables while keeping the remaining variables fixed. This approach follows the general philosophy of alternating-direction methods commonly used in mixed-integer optimization and is inspired, in particular, by the overlapping alternating direction method proposed by \cite{cattaruzza2024exact}. While the structure of 
$\mathrm{MLOP}_g$ differs from the quantile minimisation setting studied in 
their work, the underlying idea remains the same: instead of optimising all 
decision variables simultaneously, the algorithm alternates between simpler 
subproblems in which only a subset of variables is updated while the others are 
kept fixed. This alternating decomposition allows the method to exploit the 
structure of the model and substantially reduces the computational effort 
required at each iteration.

In the compact formulation of $\mathrm{MLOP}_g$, each group is described by a 
ranking $\sigma^i$ and an associated weight $\omega^i$, while the latent 
preference structure is implicitly represented through the convex combination 
induced by these quantities. This perspective naturally leads to a decomposition 
of the problem into two complementary optimisation directions. 

In the first direction, the population weights are kept fixed and the group 
rankings are updated so as to better approximate the observed preference matrix. 
In the second direction, the rankings are fixed and the population weights are 
re-optimised to obtain the best convex combination of the current orders. 
By alternating between these two steps, the heuristic progressively improves 
the agreement between the set of rankings, their mixture weights, and the 
observed preferences. Crucially, this alternating scheme completely avoids the need 
for the auxiliary continuous variables $u_{rs}^i$ introduced in the compact formulation 
to linearise the product $\omega^i x_{rs}^i$. Since either $\bm{\omega}$ or $\bm{x}$ 
is treated as a fixed parameter at each step, their product is inherently linear, 
drastically reducing the number of variables and constraints in the resulting subproblems.

Regarding the initialisation of the alternating procedure, we generate starting points by drawing random vectors over the probability simplex for the group weights. To ensure robustness and reduce sensitivity to the starting point, the heuristic can be executed from multiple random initialisations, and the overall best solution obtained is retained.  

We first update the group rankings while keeping the weight vector fixed. 
Given $\hat{\bm{\omega}}=(\hat{\omega}^1,\dots,\hat{\omega}^g)$, the ranking-update 
step solves the following mixed-integer linear program:
\begin{subequations}\label{model:Step1}
\begin{align}
\min \quad & \sum_{r=1}^n \sum_{s=r+1}^n v_{rs}
\label{eq:step1_obj}\\
\text{s.t.}\quad
& x_{rs}^i + x_{st}^i - x_{rt}^i \le 1 && \forall\, r,s,t \in [[n]],\, \forall\, i \in [[g]]: r < s < t,
\label{eq:step1_trans1}\\
& x_{rs}^i + x_{st}^i - x_{rt}^i \ge 0 && \forall\, r,s,t \in [[n]],\, \forall\, i \in [[g]]: r < s < t,
\label{eq:step1_trans2}\\
& v_{rs} \ge c_{rs}-\sum_{i=1}^g \hat{\omega}^i x_{rs}^i
&& \forall\, r,s \in [[n]]: r<s,
\label{eq:step1_abs1}\\
& v_{rs} \ge \sum_{i=1}^g \hat{\omega}^i x_{rs}^i - c_{rs}
&& \forall\, r,s \in [[n]]: r<s,
\label{eq:step1_abs2}\\
& x_{rs}^i \in \{0,1\}
&& \forall\, r,s \in [[n]],\, \forall\, i \in [[g]]: r < s.
\label{eq:step1_bin}
\end{align}
\end{subequations}

Since \eqref{model:Step1} is a mixed-integer linear program, solving it to optimality 
at each iteration can be time-consuming. However, as our framework is a heuristic, 
it is not strictly necessary to find the exact optimal group rankings at every step to achieve 
convergence. Therefore, to significantly accelerate the process, a strict time limit can be imposed 
on the solver during this phase. If the optimum is not proven within this limit, the method simply 
extracts the best incumbent solution found so far and uses it as the outcome of the step.

Solving~\eqref{model:Step1} yields an updated set of rankings 
$\hat{\bm{x}}$. In the second optimisation direction, the rankings 
are kept fixed and the population weights are re-optimised so as to 
obtain the best convex combination of the current orders. Since all integer variables 
are fixed, this reduces to a simple linear program:
\begin{subequations} \label{model:Step2}
\begin{align} 
\min \quad & \sum_{r=1}^n \sum_{s=r+1}^n v_{rs} 
\label{eq:step2_obj}\\
\text{s.t.}\quad
    & \sum_{i=1}^g \omega^i = 1
    \label{eq:step2_simplex}\\
    & v_{rs} \ge c_{rs}-\sum_{i=1}^g \omega^i \hat x_{rs}^i
    && \forall\, r,s \in [[n]]: r<s,
    \label{eq:step2_abs1}\\
    & v_{rs} \ge \sum_{i=1}^g \omega^i \hat x_{rs}^i - c_{rs}
    && \forall\, r,s \in [[n]]: r<s,
    \label{eq:step2_abs2}\\
    & \omega^i \ge 0
    && \forall\, i \in [[g]].
    \label{eq:step2_nonneg_w}
\end{align}
\end{subequations}
Solving~\eqref{model:Step2} yields an updated weight vector 
$\hat{\bm{\omega}}$, which becomes the reference point for the next 
iteration of the alternating scheme.

The two optimisation phases described above are combined into a general iterative alternating-direction heuristic, which is summarised in Algorithm~\ref{alg:heuristic}.
\begin{algorithm}[htp!!]
\footnotesize
\caption{Alternating-direction heuristic (Multi-start)}\label{alg:heuristic}
\DontPrintSemicolon
\LinesNotNumbered

\SetKwInput{KwData}{Input}
\SetKwInput{KwResult}{Output}

\KwData{%
\begin{tabular}[t]{@{}p{13.2cm}@{}}
Number of items $n$, Maximum number of groups $g$ \;
Normalised preference matrix $C = (c_{rs})_{n \times n}$ \;
Maximum number of iterations $It_{max}$ \;
Tolerance $\varepsilon$ \;
Number of starts $N_{start}$ \;
Max time limit for MILP phase $T_{max}$
\end{tabular}
}
\KwResult{%
\begin{tabular}[t]{@{}p{13.2cm}@{}}
Best heuristic solution $(\hat{\bm{x}}, \hat{\bm{\omega}})$
\end{tabular}
}

Initialise $Obj_{best} \gets +\infty$ \textbf{and} $k \gets 1$ \;

\While{$k \le N_{start}$}{

  Generate random feasible weight vector $\hat{\bm{\omega}}$ on the probability simplex \;
  
  Set $I \gets 1$ \textbf{and} $Obj_{local} \gets +\infty$ \;

  \While{$I \le It_{max}$}{

    $Obj_{start} \gets Obj_{local}$ \tcp*[l]{Objective at the start of the iteration}

    Solve model~\eqref{model:Step1} with fixed $\hat{\bm{\omega}}$ (time limit $T_{max}$) yielding $\bm{x}$ and $Obj_1$ \;
    \If{$Obj_1 < Obj_{local}$}{
      $\tilde{\bm{x}} \gets \bm{x}$ \textbf{and} $Obj_{local} \gets Obj_1$ \;
    }

    Solve model~\eqref{model:Step2} with fixed $\hat{\bm{x}}$ yielding $\bm{\omega}$ and $Obj_2$ \;
    \If{$Obj_2 < Obj_{local}$}{
      $\tilde{\bm{\omega}} \gets \bm{\omega}$ \textbf{and} $Obj_{local} \gets Obj_2$ \;
    }

    \If{$|Obj_{start} - Obj_{local}| < \varepsilon$}{
      \textbf{break} \tcp*[l]{Stop if no significant improvement in this iteration}
    }

    $I \gets I + 1$ \;
  }

  \If{$Obj_{local} < Obj_{best}$}{
    $Obj_{best} \gets Obj_{local}$ \;
    $(\hat{\bm{x}},\hat{\bm{\omega}}) \gets (\tilde{\bm{x}}, \tilde{\bm{\omega}})$ \;
  }
  
  $k \gets k + 1$ \;
}

\Return{$(\hat{\bm{x}}, \hat{\bm{\omega}})$}\;

\end{algorithm}

To ensure computational efficiency, the execution logic of the algorithm is governed by several interacting parameters. The heuristic runs for a predetermined number of independent random starts ($N_{start}$), which dictates the outer loop of the algorithm. Within each start, the alternating scheme terminates either when the improvement in the objective value between consecutive iterations falls below a strict tolerance $\varepsilon$, or when a maximum number of local iterations ($It_{max}$) is reached. Finally, to prevent bottlenecks during the most demanding phase, the computational effort spent on the MILP subproblem~\eqref{model:Step1} is strictly bounded by the time limit $T_{max}$.

\section{Computational experiment} \label{sec:exp_com}

This section evaluates the computational performance of the proposed models and algorithms. \footnote{All instances, model files, and computational results are available in the supplementary repository at \url{https://doi.org/10.5281/zenodo.19709333}.}
The experiments pursue two main objectives. First, we analyse the behaviour of the exact 
approach, relying exclusively on the compact mixed-integer formulation \eqref{model:MLOP4}, on a collection of 
synthetic instances in order to assess its ability to recover the latent preference structures 
used to generate the data, including the underlying rankings and the associated group weights. 
Second, we compare this exact approach with the proposed heuristic in terms of solution quality 
and computational time, evaluating their scalability as the problem size and the number of 
groups increase.

All tests were performed using the commercial IP solver \emph{Gurobi} on a server equipped with an AMD Ryzen~9~7950X processor (3.4~GHz, 16~cores and 32~threads) and 4\,$\times$\,48~GB of DDR5 RAM (196~GB in total). The system is configured to allow up to 16 single-threaded tasks to run concurrently, each with approximately 16~GB of allocated memory, and relies on 2\,$\times$\,2~TB Western Digital Black SN770 NVMe SSDs in RAID~1 to ensure data reliability during execution. In our experiments, each job was assigned 8 computational threads and executed using the \emph{Gurobi} solver through Python implementations.

\subsection{Synthetic instance generation}

Since clean publicly available datasets with known heterogeneous preference structures are difficult to obtain, we generate synthetic instances in a controlled manner to validate our approach. Each instance is defined by the number of items $n$, the number of latent groups $\overline{g}$, the group weights $\omega^i$, and a dispersion parameter $D$, defined as
\[
D=\left\lceil p\binom{n}{2}\right\rceil,
\]
where $p$ is the percentage of the maximum Kendall distance $\binom{n}{2}$.

Instances are generated according to the following procedure. First, $\overline{g}$ central permutations of the $n$ items are generated to represent the reference ranking of each group. These permutations are sampled at random while enforcing a minimum Kendall distance between any pair of them, ensuring that the reference rankings are sufficiently distinct. Next, rankings are generated around the central permutation of each group with Kendall distance at most $D$. A total of 1000 rankings are generated and distributed among the groups proportionally to their weights $\omega^i$. Finally, all generated rankings are aggregated to construct the preference matrix $C$.

In the computational experiments, we consider instances with $n=12$ and $n=24$. For each value of $n$, we generate instances with $\overline{g}=2$, $\overline{g}=3$, and $\overline{g}=4$ latent groups. For each of these cases, we consider three noise levels, namely $p=1$, $p=5$, and $p=10$, corresponding to 1\%, 5\%, and 10\% of the maximum Kendall distance, respectively. Finally, for every combination of $n$, $\overline{g}$, and $p$, we consider two different weight structures: one with equal group weights, and another one in which each group has twice the weight of the following group. Altogether, this yields a total of 36 synthetic instances. The characteristics of all instances are summarized in Table~\ref{tab:instance_parameters}.

\begin{table}[htp!]
\centering
\scriptsize
\caption{Characteristics of the synthetic instances.}
\label{tab:instance_parameters}

\begin{minipage}[t]{0.49\textwidth}
\centering
\begin{tabular}{c c c c c l}
\toprule
Instance & $n$ & $\overline{g}$ & $p$ & $D$ & $\overline{\bm{\omega}}$ \\
\midrule
$R_1$  & 12 & 2 & 1  & 1 & $(0.667,0.333)$ \\
$R_2$  & 12 & 2 & 1  & 1 & $(0.500,0.500)$ \\
$R_3$  & 12 & 2 & 5  & 4 & $(0.667,0.333)$ \\
$R_4$  & 12 & 2 & 5  & 4 & $(0.500,0.500)$ \\
$R_5$  & 12 & 2 & 10 & 8 & $(0.667,0.333)$ \\
$R_6$  & 12 & 2 & 10 & 8 & $(0.500,0.500)$ \\[0.3em]

$R_7$  & 12 & 3 & 1  & 1 & $(0.571,0.286,0.143)$ \\
$R_8$  & 12 & 3 & 1  & 1 & $(0.334,0.333,0.333)$ \\
$R_9$  & 12 & 3 & 5  & 4 & $(0.571,0.286,0.143)$ \\
$R_{10}$ & 12 & 3 & 5  & 4 & $(0.334,0.333,0.333)$ \\
$R_{11}$ & 12 & 3 & 10 & 8 & $(0.571,0.286,0.143)$ \\
$R_{12}$ & 12 & 3 & 10 & 8 & $(0.334,0.333,0.333)$ \\[0.3em]

$R_{13}$ & 12 & 4 & 1  & 1 & $(0.533,0.257,0.133,0.067)$ \\
$R_{14}$ & 12 & 4 & 1  & 1 & $(0.250,0.250,0.250,0.250)$ \\
$R_{15}$ & 12 & 4 & 5  & 4 & $(0.533,0.257,0.133,0.067)$ \\
$R_{16}$ & 12 & 4 & 5  & 4 & $(0.250,0.250,0.250,0.250)$ \\
$R_{17}$ & 12 & 4 & 10 & 8 & $(0.533,0.257,0.133,0.067)$ \\
$R_{18}$ & 12 & 4 & 10 & 8 & $(0.250,0.250,0.250,0.250)$ \\
\bottomrule
\end{tabular}
\end{minipage}
\hfill
\begin{minipage}[t]{0.49\textwidth}
\centering
\begin{tabular}{c c c c c l}
\toprule
Instance & $n$ & $\overline{g}$ & $p$ & $D$ & $\overline{\bm{\omega}}$ \\
\midrule
$R_{19}$ & 24 & 2 & 1  & 1 & $(0.667,0.333)$ \\
$R_{20}$ & 24 & 2 & 1  & 1 & $(0.500,0.500)$ \\
$R_{21}$ & 24 & 2 & 5  & 4 & $(0.667,0.333)$ \\
$R_{22}$ & 24 & 2 & 5  & 4 & $(0.500,0.500)$ \\
$R_{23}$ & 24 & 2 & 10 & 8 & $(0.667,0.333)$ \\
$R_{24}$ & 24 & 2 & 10 & 8 & $(0.500,0.500)$ \\[0.3em]

$R_{25}$ & 24 & 3 & 1  & 1 & $(0.571,0.286,0.143)$ \\
$R_{26}$ & 24 & 3 & 1  & 1 & $(0.334,0.333,0.333)$ \\
$R_{27}$ & 24 & 3 & 5  & 4 & $(0.571,0.286,0.143)$ \\
$R_{28}$ & 24 & 3 & 5  & 4 & $(0.334,0.333,0.333)$ \\
$R_{29}$ & 24 & 3 & 10 & 8 & $(0.571,0.286,0.143)$ \\
$R_{30}$ & 24 & 3 & 10 & 8 & $(0.334,0.333,0.333)$ \\[0.3em]

$R_{31}$ & 24 & 4 & 1  & 1 & $(0.533,0.257,0.133,0.067)$ \\
$R_{32}$ & 24 & 4 & 1  & 1 & $(0.250,0.250,0.250,0.250)$ \\
$R_{33}$ & 24 & 4 & 5  & 4 & $(0.533,0.257,0.133,0.067)$ \\
$R_{34}$ & 24 & 4 & 5  & 4 & $(0.250,0.250,0.250,0.250)$ \\
$R_{35}$ & 24 & 4 & 10 & 8 & $(0.533,0.257,0.133,0.067)$ \\
$R_{36}$ & 24 & 4 & 10 & 8 & $(0.250,0.250,0.250,0.250)$ \\
\bottomrule
\end{tabular}
\end{minipage}

\end{table}

\subsection{Exact compact model performance}

This section evaluates the computational tractability of the exact mixed-integer formulation \eqref{model:MLOP4} on synthetic instances and analyzes how the representation quality evolves as the number of groups ($g$) increases. 

To evaluate and compare solution quality across different values of $g$ and instance sizes, we define a normalised measure of fit inspired by the \emph{degree of linearity} of the classical LOP \cite{marti2022exact}:
\[
\mathrm{Fit}(\bm x, \bm \omega) \;=\; 1-\frac{\sum_{r=1}^{n-1} \sum_{s=r+1}^n
\Bigl|c_{rs}-\sum_{i=1}^g \omega^i x_{rs}^i\Bigr|}{\binom{n}{2}},
\]
where $(\bm x, \bm \omega)$ is a feasible solution. This measure ranges in $[0,1]$, with values closer to 1 indicating a better approximation of the observed preference matrix. 

Table~\ref{tab:MLOP_sinteticas} reports the computational results for 18 synthetic instances with $n=12$ (from Table~\ref{tab:instance_parameters}). The model was solved for $g \in \{1,2,3,4\}$ with a 4-hour time limit, including the symmetry-breaking constraints \eqref{eq:sim}. The table details the objective value, fit (\%), computational time (s), absolute optimality gap, and best solution group weights. All numerical values are rounded to three decimal places.

\begingroup
\scriptsize
\setlength{\LTleft}{\fill}
\setlength{\LTright}{\fill}

\begin{longtable}[c]{c c l r r r r r l}
\caption{Computational results of the exact method on the synthetic instances of the Mixture Linear Ordering Problem.}
\label{tab:MLOP_sinteticas}\\
\toprule
Instance & $n$ & $\overline{\bm{\omega}}$ & $g$ & Obj.\ value & Fit (\%) & Time (s) & Gap (abs) & Best solution weights \\
\midrule
\endfirsthead

\caption[]{Computational results of the exact method (continued).}\\
\toprule
Instance & $n$ & $\overline{\bm{\omega}}$ & $g$ & Obj.\ value & Fit (\%) & Time (s) & Gap (abs) & Best solution weights \\
\midrule
\endhead

\midrule
\multicolumn{9}{r}{\scriptsize\itshape (continued on next page)}\\
\endfoot

\bottomrule
\endlastfoot

\sffamily\footnotesize

%% --- Bloque R1 ---
\multirow{4}{*}{$R_1$} & \multirow{4}{*}{12} & \multirow{4}{*}{$(0.667,0.333)$} & 1 & 12.144 & 81.600 & 0.005 & 0.000 & (1.000) \\*
 &  &  & 2 & 0.934 & 98.585 & 0.068 & 0.000 & (0.667, 0.333) \\*
 &  &  & 3 & 0.562 & 99.148 & 1.277 & 0.000 & (0.598, 0.333, 0.069) \\*
 &  &  & 4 & 0.290 & 99.561 & 15.177 & 0.000 & (0.520, 0.333, 0.085, 0.062) \\[0.5em]

%% --- Bloque R2 ---
\multirow{4}{*}{$R_2$} & \multirow{4}{*}{12} & \multirow{4}{*}{$(0.500,0.500)$} & 1 & 8.666 & 86.870 & 0.005 & 0.000 & (1.000) \\*
 &  &  & 2 & 1.000 & 98.485 & 0.068 & 0.000 & (0.500, 0.500) \\*
 &  &  & 3 & 0.640 & 99.030 & 2.458 & 0.000 & (0.500, 0.452, 0.048) \\*
 &  &  & 4 & 0.344 & 99.479 & 35.111 & 0.000 & (0.452, 0.452, 0.048, 0.048) \\[0.5em]

%% --- Bloque R3 ---
\multirow{4}{*}{$R_3$} & \multirow{4}{*}{12} & \multirow{4}{*}{$(0.667,0.333)$} & 1 & 12.349 & 81.289 & 0.005 & 0.000 & (1.000) \\*
 &  &  & 2 & 2.874 & 95.645 & 0.311 & 0.000 & (0.667, 0.333) \\*
 &  &  & 3 & 1.942 & 97.058 & 175.647 & 0.000 & (0.523, 0.333, 0.144) \\*
 &  &  & 4 & 1.334 & 97.979 & 14400.000 & 0.603 & (0.386, 0.333, 0.143, 0.138) \\[0.5em]

%% --- Bloque R4 ---
\multirow{4}{*}{$R_4$} & \multirow{4}{*}{12} & \multirow{4}{*}{$(0.500,0.500)$} & 1 & 21.344 & 67.661 & 0.005 & 0.000 & (1.000) \\*
 &  &  & 2 & 2.830 & 95.712 & 0.415 & 0.000 & (0.500, 0.500) \\*
 &  &  & 3 & 1.968 & 97.018 & 421.689 & 0.000 & (0.500, 0.390, 0.110) \\*
 &  &  & 4 & 1.123 & 98.298 & 14400.000 & 0.274 & (0.397, 0.394, 0.106, 0.103) \\[0.5em]

%% --- Bloque R5 ---
\multirow{4}{*}{$R_5$} & \multirow{4}{*}{12} & \multirow{4}{*}{$(0.667,0.333)$} & 1 & 13.604 & 79.388 & 0.006 & 0.000 & (1.000) \\*
 &  &  & 2 & 4.556 & 93.097 & 1.563 & 0.000 & (0.667, 0.333) \\*
 &  &  & 3 & 2.975 & 95.492 & 13930.309 & 0.000 & (0.486, 0.333, 0.181) \\*
 &  &  & 4 & 1.877 & 97.156 & 14400.000 & 1.207 & (0.492, 0.232, 0.179, 0.097) \\[0.5em]

%% --- Bloque R6 ---
\multirow{4}{*}{$R_6$} & \multirow{4}{*}{12} & \multirow{4}{*}{$(0.500,0.500)$} & 1 & 20.440 & 69.030 & 0.005 & 0.000 & (1.000) \\*
 &  &  & 2 & 3.822 & 94.209 & 0.396 & 0.000 & (0.500, 0.500) \\*
 &  &  & 3 & 2.746 & 95.839 & 4996.115 & 0.000 & (0.498, 0.393, 0.109) \\*
 &  &  & 4 & 1.800 & 97.273 & 14400.000 & 1.079 & (0.375, 0.358, 0.145, 0.122) \\[0.5em]

%% --- Bloque R7 ---
\multirow{4}{*}{$R_7$} & \multirow{4}{*}{12} & \multirow{4}{*}{$(0.571,0.286,0.143)$} & 1 & 20.762 & 68.542 & 0.005 & 0.000 & (1.000) \\*
 &  &  & 2 & 5.334 & 91.918 & 2.848 & 0.000 & (0.589, 0.411) \\*
 &  &  & 3 & 0.920 & 98.606 & 6.273 & 0.000 & (0.571, 0.286, 0.143) \\*
 &  &  & 4 & 0.616 & 99.067 & 739.215 & 0.000 & (0.515, 0.286, 0.143, 0.056) \\[0.5em]

%% --- Bloque R8 ---
\multirow{4}{*}{$R_8$} & \multirow{4}{*}{12} & \multirow{4}{*}{$(0.334,0.333,0.333)$} & 1 & 19.305 & 70.750 & 0.005 & 0.000 & (1.000) \\*
 &  &  & 2 & 4.249 & 93.562 & 0.766 & 0.000 & (0.666, 0.334) \\*
 &  &  & 3 & 0.942 & 98.573 & 39.070 & 0.000 & (0.334, 0.333, 0.333) \\*
 &  &  & 4 & 0.676 & 98.976 & 5577.490 & 0.000 & (0.334, 0.333, 0.303, 0.030) \\[0.5em]

%% --- Bloque R9 ---
\multirow{4}{*}{$R_9$} & \multirow{4}{*}{12} & \multirow{4}{*}{$(0.571,0.286,0.143)$} & 1 & 13.853 & 79.011 & 0.005 & 0.000 & (1.000) \\*
 &  &  & 2 & 5.078 & 92.306 & 2.444 & 0.000 & (0.589, 0.411) \\*
 &  &  & 3 & 2.714 & 95.888 & 2907.991 & 0.000 & (0.571, 0.287, 0.142) \\*
 &  &  & 4 & 1.749 & 97.350 & 14400.000 & 0.963 & (0.456, 0.284, 0.143, 0.117) \\[0.5em]

%% --- Bloque R10 ---
\multirow{4}{*}{$R_{10}$} & \multirow{4}{*}{12} & \multirow{4}{*}{$(0.334,0.333,0.333)$} & 1 & 14.416 & 78.158 & 0.006 & 0.000 & (1.000) \\*
 &  &  & 2 & 5.518 & 91.639 & 5.874 & 0.000 & (0.666, 0.334) \\*
 &  &  & 3 & 2.172 & 96.709 & 1671.735 & 0.000 & (0.342, 0.336, 0.322) \\*
 &  &  & 4 & 1.480 & 97.758 & 14400.000 & 0.734 & (0.335, 0.332, 0.249, 0.084) \\[0.5em]

%% --- Bloque R11 ---
\multirow{4}{*}{$R_{11}$} & \multirow{4}{*}{12} & \multirow{4}{*}{$(0.571,0.286,0.143)$} & 1 & 15.299 & 76.820 & 0.005 & 0.000 & (1.000) \\*
 &  &  & 2 & 5.179 & 92.153 & 3.247 & 0.000 & (0.602, 0.398) \\*
 &  &  & 3 & 3.494 & 94.706 & 14400.000 & 0.994 & (0.444, 0.395, 0.161) \\*
 &  &  & 4 & 1.971 & 97.014 & 14400.000 & 1.313 & (0.432, 0.276, 0.168, 0.124) \\[0.5em]

%% --- Bloque R12 ---
\multirow{4}{*}{$R_{12}$} & \multirow{4}{*}{12} & \multirow{4}{*}{$(0.334,0.333,0.333)$} & 1 & 22.301 & 66.211 & 0.005 & 0.000 & (1.000) \\*
 &  &  & 2 & 4.887 & 92.595 & 3.924 & 0.000 & (0.653, 0.347) \\*
 &  &  & 3 & 3.267 & 95.050 & 14400.000 & 1.480 & (0.376, 0.334, 0.290) \\*
 &  &  & 4 & 2.432 & 96.315 & 14400.000 & 2.083 & (0.336, 0.316, 0.253, 0.095) \\[0.5em]

%% --- Bloque R13 ---
\multirow{4}{*}{$R_{13}$} & \multirow{4}{*}{12} & \multirow{4}{*}{$(0.533,0.257,0.133,0.067)$} & 1 & 18.700 & 71.667 & 0.005 & 0.000 & (1.000) \\*
 &  &  & 2 & 6.624 & 89.964 & 2.936 & 0.000 & (0.720, 0.280) \\*
 &  &  & 3 & 1.811 & 97.256 & 186.039 & 0.000 & (0.533, 0.267, 0.200) \\*
 &  &  & 4 & 0.676 & 98.976 & 346.974 & 0.000 & (0.533, 0.267, 0.133, 0.067) \\[0.5em]

%% --- Bloque R14 ---
\multirow{4}{*}{$R_{14}$} & \multirow{4}{*}{12} & \multirow{4}{*}{$(0.250,0.250,0.250,0.250)$} & 1 & 19.850 & 69.924 & 0.005 & 0.000 & (1.000) \\*
 &  &  & 2 & 7.182 & 89.118 & 20.508 & 0.000 & (0.511, 0.489) \\*
 &  &  & 3 & 2.296 & 96.521 & 2145.983 & 0.000 & (0.500, 0.251, 0.249) \\*
 &  &  & 4 & 0.794 & 98.797 & 9312.460 & 0.000 & (0.250, 0.250, 0.250, 0.250) \\[0.5em]

%% --- Bloque R15 ---
\multirow{4}{*}{$R_{15}$} & \multirow{4}{*}{12} & \multirow{4}{*}{$(0.533,0.257,0.133,0.067)$} & 1 & 17.957 & 72.792 & 0.005 & 0.000 & (1.000) \\*
 &  &  & 2 & 6.751 & 89.771 & 6.888 & 0.000 & (0.588, 0.412) \\*
 &  &  & 3 & 3.009 & 95.441 & 14400.000 & 0.681 & (0.536, 0.275, 0.189) \\*
 &  &  & 4 & 1.894 & 97.130 & 14400.000 & 1.044 & (0.405, 0.269, 0.195, 0.131) \\[0.5em]

%% --- Bloque R16 ---
\multirow{4}{*}{$R_{16}$} & \multirow{4}{*}{12} & \multirow{4}{*}{$(0.250,0.250,0.250,0.250)$} & 1 & 19.944 & 69.782 & 0.005 & 0.000 & (1.000) \\*
 &  &  & 2 & 5.816 & 91.188 & 3.168 & 0.000 & (0.692, 0.308) \\*
 &  &  & 3 & 2.864 & 95.661 & 14400.000 & 0.705 & (0.498, 0.254, 0.248) \\*
 &  &  & 4 & 1.862 & 97.179 & 14400.000 & 1.408 & (0.384, 0.252, 0.250, 0.114) \\[0.5em]

%% --- Bloque R17 ---
\multirow{4}{*}{$R_{17}$} & \multirow{4}{*}{12} & \multirow{4}{*}{$(0.533,0.257,0.133,0.067)$} & 1 & 18.728 & 71.624 & 0.005 & 0.000 & (1.000) \\*
 &  &  & 2 & 6.833 & 89.647 & 14.876 & 0.000 & (0.663, 0.337) \\*
 &  &  & 3 & 3.212 & 95.133 & 14400.000 & 1.284 & (0.596, 0.234, 0.170) \\*
 &  &  & 4 & 2.289 & 96.532 & 14400.000 & 1.763 & (0.411, 0.311, 0.186, 0.092) \\[0.5em]

%% --- Bloque R18 ---
\multirow{4}{*}{$R_{18}$} & \multirow{4}{*}{12} & \multirow{4}{*}{$(0.250,0.250,0.250,0.250)$} & 1 & 21.512 & 67.406 & 0.005 & 0.000 & (1.000) \\*
 &  &  & 2 & 6.072 & 90.800 & 14.748 & 0.000 & (0.641, 0.359) \\*
 &  &  & 3 & 3.070 & 95.348 & 14400.000 & 1.374 & (0.506, 0.278, 0.216) \\*
 &  &  & 4 & 2.178 & 96.700 & 14400.000 & 1.876 & (0.324, 0.252, 0.250, 0.174) \\

\end{longtable}
\endgroup

As observed in Table~\ref{tab:MLOP_sinteticas}, increasing the number of latent groups $g$ significantly improves the representation of the aggregate preference matrix. For $g=1$, which corresponds to the classical Linear Ordering Problem (assuming a strictly homogeneous population), the Fit ranges from a minimum of 66.211\% ($R_{12}$) to a maximum of 86.870\% ($R_2$). By relaxing the homogeneity assumption and allowing the model to capture multiple preference patterns ($g>1$), the Fit metric increases substantially. With $g=2$, all instances exceed an 89\% fit, and for $g=4$, the model systematically achieves values above 96\%, indicating that the observed matrix is almost perfectly explained by the mixture of underlying linear orders. Naturally, this is accompanied by a monotonic decrease in the objective value.

More importantly, the magnitude of this reduction in the objective value is highly indicative of the true underlying population structure. We observe a sharp drop in the objective function, similar to the well-known \emph{elbow effect} in clustering, exactly when $g$ matches the true number of latent groups used to generate the instance. This drop is then followed by only marginal improvements for larger values of $g$. For example, instance $R_1$ was generated with 2 latent groups. Its objective value decreases significantly from 12.144 for $g=1$ to 0.934 for $g=2$, but it only improves by a residual 0.372 and 0.272 when allowing $g=3$ and $g=4$, respectively. Conversely, for instances generated with 4 latent groups, such as $R_{13}$, the substantial decreases continue all the way up to $g=4$. It should be noted, however, that the sharpness of this elbow effect can be mitigated by the level of noise in the data; in highly noisy instances, the transition may appear less abrupt as the model begins to leverage additional groups to fit random variations rather than true structural patterns. Despite this, the overall behavior confirms that the proposed exact formulation not only minimizes the preference disagreement but also serves as a reliable mechanism to deduce the unknown number of consensus rankings in a given population.

However, the inclusion of multiple preference groups imposes a severe penalty on computational tractability. For $g=1$, the solver trivially proves optimality in a few milliseconds. Moving to $g=2$, the computational time increases but remains entirely manageable, with all instances solved to proven optimality in less than 21 seconds. The combinatorial complexity grows drastically for $g=3$ and $g=4$. While some structurally simpler instances are solved within seconds or minutes for $g=3$, a significant portion hits the 4-hour time limit (14,400 s). For $g=4$, only a few instances ($R_1, R_2, R_7, R_8, R_{13}, R_{14}$) are solved to optimality before the timer expires. This exponential growth in computational effort highlights a severe scalability limitation of the exact MIP formulation. Given that real-world applications frequently require handling larger matrices and multiple latent groups, relying solely on the exact approach becomes impractical. This computational bottleneck strongly motivates the development of the alternating-direction heuristic proposed in Section~\ref{sec:heuristic}, which aims to bypass these limitations.

Despite failing to prove optimality within 4 hours for the more complex cases, the exact formulation proves to be highly effective at finding near-optimal solutions. The absolute optimality gaps remaining after the time limit are remarkably small. For $g=3$, the maximum absolute gap observed is 1.480 (instance $R_{12}$), and for $g=4$, it peaks at 2.083 (also for $R_{12}$). These marginal gaps confirm that, even when exact convergence is computationally prohibitive, the proposed formulation yields exceptionally high-quality approximations of the global optimum, thereby establishing a solid mathematical benchmark against which the performance of our heuristic can be confidently evaluated.

Finally, an analysis of the estimated group weights provides valuable insight into the mixture structure and the risk of overfitting. When the allowed number of groups $g$ perfectly matches the true number of latent groups used to generate the synthetic data (indicated by the size of the $\omega$ vector), the exact model accurately recovers the true population proportions (e.g., yielding weights of exactly 0.500 and 0.500 for $R_2$ and $R_4$ with $g=2$). However, when $g$ exceeds the true number of underlying groups, the model leverages the extra degrees of freedom to fit the random noise present in the observed matrix. This overfitting manifests as a fragmentation of the true groups. For instance, in $R_4$ ($\omega = (0.500, 0.500)$), setting $g=4$ artificially splits the population into four weights (0.397, 0.394, 0.106, 0.103) to further lower the objective value from 2.830 to 1.123. This highlights a classic bias-variance trade-off: while increasing $g$ monotonically improves the Fit metric, artificially high values of $g$ result in memorizing the noise rather than capturing the true underlying preference structure.

\subsection{Comparison between the exact method and the heuristic}

This section compares the exact mixed-integer formulation with the proposed matheuristic approach. The primary goal is to evaluate the heuristic's ability to maintain high solution quality while mitigating the severe computational burden associated with the exact model.

Table~\ref{tab:exact_vs_heuristic} reports the results for all 36 synthetic instances solved for $g \in \{2,3,4\}$. Both approaches enforce the symmetry-breaking constraints \eqref{eq:sim}. For both methods, we report the objective value, fit (\%), and computational time (s). For the exact method, which was run with a global 4-hour (14,400 seconds) time limit, we additionally report the absolute optimality gap at termination. All numerical values are rounded to three decimal places.

To ensure a fair comparison, the heuristic's hyper-parameters were explicitly designed to bound its maximum theoretical effort to the exact same 4-hour limit. To reduce dependence on the initial weight configuration, the heuristic was run from 10 completely random starting points. For each starting point, the maximum number of alternating iterations was capped at 12, and the internal Mixed-Integer Linear Programming (MILP) subproblem was constrained by a 120-second time limit. The algorithm advances if the internal convergence tolerance between consecutive solutions is greater than $10^{-5}$. Consequently, in the absolute worst-case scenario, the heuristic would spend exactly $10 \cdot 12 \cdot 120 = 14400$ seconds resolving MILP subproblems, perfectly matching the exact method's time limit. For the heuristic, we also report the total number of alternating iterations executed across all 10 initializations.

\begingroup
\footnotesize
\setlength{\LTleft}{\fill}
\setlength{\LTright}{\fill}

\begin{longtable}{c c r r r r r r r r}
\caption{Comparison between the exact method and the heuristic on the synthetic instances.}
\label{tab:exact_vs_heuristic}\\
\toprule
Inst. & $g$ & \multicolumn{4}{c}{Exact} & \multicolumn{4}{c}{Heuristic} \\
\cmidrule(lr){3-6} \cmidrule(lr){7-10}
 & & Obj. Value & Fit (\%) & Time (s) & Gap (abs) & Obj. Value & Fit (\%) & Time (s) & \# Iter. \\
\midrule
\endfirsthead

\caption[]{Comparison between the exact method and the heuristic (continued).}\\
\toprule
Inst. & $g$ & \multicolumn{4}{c}{Exact} & \multicolumn{4}{c}{Heuristic} \\
\cmidrule(lr){3-6} \cmidrule(lr){7-10}
 & & Obj. Value & Fit (\%) & Time (s) & Gap (abs) & Obj. Value & Fit (\%) & Time (s) & \# Iter. \\
\midrule
\endhead

\midrule
\multicolumn{10}{r}{\scriptsize\itshape (continues on next page)}\\
\endfoot

\bottomrule
\endlastfoot

\sffamily\footnotesize

% ==========================================
% GRUPO 1: n=12 (C=66)
% ==========================================
\multirow{3}{*}{$R_1$} & 2 & 0.934 & 98.585 & 0.068 & 0.000 & 0.934 & 98.585 & 0.350 & 25 \\*
 & 3 & 0.562 & 99.148 & 1.277 & 0.000 & 0.562 & 99.148 & 0.711 & 30 \\*
 & 4 & 0.290 & 99.561 & 15.177 & 0.000 & 0.290 & 99.561 & 1.770 & 31 \\[0.5em]

\multirow{3}{*}{$R_2$} & 2 & 1.000 & 98.485 & 0.068 & 0.000 & 1.000 & 98.485 & 0.334 & 24 \\*
 & 3 & 0.640 & 99.030 & 2.458 & 0.000 & 0.640 & 99.030 & 1.032 & 28 \\*
 & 4 & 0.344 & 99.479 & 35.111 & 0.000 & 0.344 & 99.479 & 2.481 & 39 \\[0.5em]

\multirow{3}{*}{$R_3$} & 2 & 2.874 & 95.645 & 0.311 & 0.000 & 2.874 & 95.645 & 0.371 & 26 \\*
 & 3 & 1.942 & 97.058 & 175.647 & 0.000 & 1.942 & 97.058 & 1.427 & 32 \\*
 & 4 & \textbf{1.334} & 97.979 & 14400.000 & 0.603 & 1.340 & 97.970 & 7.945 & 43 \\[0.5em]

\multirow{3}{*}{$R_4$} & 2 & 2.830 & 95.712 & 0.415 & 0.000 & 2.830 & 95.712 & 0.440 & 26 \\*
 & 3 & 1.968 & 97.018 & 421.689 & 0.000 & 1.968 & 97.018 & 4.132 & 33 \\*
 & 4 & \textbf{1.123} & 98.298 & 14400.000 & 0.274 & 1.131 & 98.286 & 11.302 & 35 \\[0.5em]

\multirow{3}{*}{$R_5$} & 2 & 4.556 & 93.097 & 1.563 & 0.000 & 4.556 & 93.097 & 0.587 & 32 \\*
 & 3 & 2.975 & 95.492 & 13930.309 & 0.000 & 2.975 & 95.492 & 2.281 & 36 \\*
 & 4 & 1.877 & 97.156 & 14400.000 & 1.207 & 1.877 & 97.156 & 15.678 & 41 \\[0.5em]

\multirow{3}{*}{$R_6$} & 2 & 3.822 & 94.209 & 0.396 & 0.000 & 3.822 & 94.209 & 0.611 & 32 \\*
 & 3 & 2.746 & 95.839 & 4996.115 & 0.000 & 2.746 & 95.839 & 6.691 & 44 \\*
 & 4 & 1.800 & 97.273 & 14400.000 & 1.079 & \textbf{1.781} & 97.302 & 21.476 & 42 \\[0.5em]

\multirow{3}{*}{$R_7$} & 2 & 5.334 & 91.918 & 2.848 & 0.000 & 5.334 & 91.918 & 0.465 & 32 \\*
 & 3 & 0.920 & 98.606 & 6.273 & 0.000 & 0.920 & 98.606 & 1.383 & 33 \\*
 & 4 & 0.616 & 99.067 & 739.215 & 0.000 & 0.616 & 99.067 & 4.445 & 28 \\[0.5em]

\multirow{3}{*}{$R_8$} & 2 & 4.249 & 93.562 & 0.766 & 0.000 & 4.249 & 93.562 & 0.447 & 27 \\*
 & 3 & 0.942 & 98.573 & 39.070 & 0.000 & 0.942 & 98.573 & 3.013 & 37 \\*
 & 4 & 0.676 & 98.976 & 5577.490 & 0.000 & 0.676 & 98.976 & 12.481 & 35 \\[0.5em]

\multirow{3}{*}{$R_9$} & 2 & 5.078 & 92.306 & 2.444 & 0.000 & 5.078 & 92.306 & 0.618 & 37 \\*
 & 3 & 2.714 & 95.888 & 2907.991 & 0.000 & 2.714 & 95.888 & 2.245 & 44 \\*
 & 4 & \textbf{1.749} & 97.350 & 14400.000 & 0.963 & 1.754 & 97.342 & 11.593 & 38 \\[0.5em]

\multirow{3}{*}{$R_{10}$} & 2 & 5.518 & 91.639 & 5.874 & 0.000 & 5.518 & 91.639 & 0.572 & 30 \\*
 & 3 & 2.172 & 96.709 & 1671.735 & 0.000 & 2.172 & 96.709 & 2.688 & 33 \\*
 & 4 & 1.480 & 97.758 & 14400.000 & 0.734 & 1.480 & 97.758 & 17.786 & 43 \\[0.5em]

\multirow{3}{*}{$R_{11}$} & 2 & 5.179 & 92.153 & 3.247 & 0.000 & 5.179 & 92.153 & 0.672 & 38 \\*
 & 3 & 3.494 & 94.706 & 14400.000 & 0.994 & 3.494 & 94.706 & 2.547 & 35 \\*
 & 4 & 1.971 & 97.014 & 14400.000 & 1.313 & 1.971 & 97.014 & 19.360 & 48 \\[0.5em]

\multirow{3}{*}{$R_{12}$} & 2 & 4.887 & 92.595 & 3.924 & 0.000 & 4.887 & 92.595 & 0.353 & 22 \\*
 & 3 & 3.267 & 95.050 & 14400.000 & 1.480 & 3.267 & 95.050 & 3.189 & 33 \\*
 & 4 & 2.432 & 96.315 & 14400.000 & 2.083 & \textbf{2.365} & 96.417 & 127.782 & 42 \\[0.5em]

\multirow{3}{*}{$R_{13}$} & 2 & 6.624 & 89.964 & 2.936 & 0.000 & 6.624 & 89.964 & 0.419 & 29 \\*
 & 3 & 1.811 & 97.256 & 186.039 & 0.000 & 1.811 & 97.256 & 1.591 & 33 \\*
 & 4 & 0.676 & 98.976 & 346.974 & 0.000 & 0.676 & 98.976 & 5.762 & 31 \\[0.5em]

\multirow{3}{*}{$R_{14}$} & 2 & 7.182 & 89.118 & 20.508 & 0.000 & 7.182 & 89.118 & 0.525 & 27 \\*
 & 3 & 2.296 & 96.521 & 2145.983 & 0.000 & 2.296 & 96.521 & 5.386 & 29 \\*
 & 4 & 0.794 & 98.797 & 9312.460 & 0.000 & 0.794 & 98.797 & 30.096 & 38 \\[0.5em]

\multirow{3}{*}{$R_{15}$} & 2 & 6.751 & 89.771 & 6.888 & 0.000 & 6.751 & 89.771 & 0.792 & 45 \\*
 & 3 & 3.009 & 95.441 & 14400.000 & 0.681 & 3.009 & 95.441 & 2.585 & 36 \\*
 & 4 & \textbf{1.894} & 97.130 & 14400.000 & 1.044 & 1.895 & 97.129 & 13.525 & 38 \\[0.5em]

\multirow{3}{*}{$R_{16}$} & 2 & 5.816 & 91.188 & 3.168 & 0.000 & 5.816 & 91.188 & 0.530 & 30 \\*
 & 3 & 2.864 & 95.661 & 14400.000 & 0.705 & 2.864 & 95.661 & 4.032 & 36 \\*
 & 4 & 1.862 & 97.179 & 14400.000 & 1.408 & 1.862 & 97.179 & 37.439 & 45 \\[0.5em]

\multirow{3}{*}{$R_{17}$} & 2 & 6.833 & 89.647 & 14.876 & 0.000 & 6.833 & 89.647 & 0.769 & 40 \\*
 & 3 & 3.212 & 95.133 & 14400.000 & 1.284 & 3.212 & 95.133 & 2.511 & 36 \\*
 & 4 & 2.289 & 96.532 & 14400.000 & 1.763 & \textbf{2.157} & 96.732 & 19.173 & 32 \\[0.5em]

\multirow{3}{*}{$R_{18}$} & 2 & 6.072 & 90.800 & 14.748 & 0.000 & 6.072 & 90.800 & 0.464 & 27 \\*
 & 3 & 3.070 & 95.348 & 14400.000 & 1.374 & 3.070 & 95.348 & 7.308 & 46 \\*
 & 4 & 2.178 & 96.700 & 14400.000 & 1.876 & \textbf{2.068} & 96.867 & 72.218 & 53 \\[0.5em]

% ==========================================
% GRUPO 2: n=24 (C=276)
% ==========================================
\multirow{3}{*}{$R_{19}$} & 2 & 2.608 & 99.055 & 1.808 & 0.000 & 2.608 & 99.055 & 2.026 & 22 \\*
 & 3 & 1.830 & 99.337 & 14400.000 & 0.714 & 1.830 & 99.337 & 9.421 & 35 \\*
 & 4 & 1.441 & 99.478 & 14400.000 & 1.188 & \textbf{1.178} & 99.573 & 72.684 & 47 \\[0.5em]

\multirow{3}{*}{$R_{20}$} & 2 & 2.472 & 99.104 & 2.086 & 0.000 & 2.472 & 99.104 & 2.676 & 26 \\*
 & 3 & 1.679 & 99.392 & 8083.952 & 0.000 & 1.679 & 99.392 & 16.551 & 30 \\*
 & 4 & 0.918 & 99.667 & 14400.000 & 0.718 & 0.918 & 99.667 & 97.580 & 38 \\[0.5em]

\multirow{3}{*}{$R_{21}$} & 2 & 9.366 & 96.607 & 43.662 & 0.000 & 9.366 & 96.607 & 3.406 & 30 \\*
 & 3 & 6.649 & 97.591 & 14400.000 & 3.656 & 6.649 & 97.591 & 19.513 & 34 \\*
 & 4 & 5.086 & 98.157 & 14400.000 & 4.864 & \textbf{4.890} & 98.228 & 276.579 & 42 \\[0.5em]

\multirow{3}{*}{$R_{22}$} & 2 & 10.012 & 96.372 & 70.229 & 0.000 & 10.012 & 96.372 & 3.923 & 30 \\*
 & 3 & 7.642 & 97.231 & 14400.000 & 4.613 & \textbf{7.402} & 97.318 & 145.041 & 38 \\*
 & 4 & 4.953 & 98.205 & 14400.000 & 4.792 & 4.953 & 98.205 & 2714.293 & 50 \\[0.5em]

\multirow{3}{*}{$R_{23}$} & 2 & 13.570 & 95.083 & 528.748 & 0.000 & 13.570 & 95.083 & 3.583 & 30 \\*
 & 3 & 9.149 & 96.685 & 14400.000 & 6.350 & 9.149 & 96.685 & 43.849 & 42 \\*
 & 4 & 6.925 & 97.491 & 14400.000 & 6.775 & 6.925 & 97.491 & 1378.509 & 71 \\[0.5em]

\multirow{3}{*}{$R_{24}$} & 2 & 13.928 & 94.954 & 136.498 & 0.000 & 13.928 & 94.954 & 6.171 & 31 \\*
 & 3 & 10.358 & 96.247 & 14400.000 & 7.886 & \textbf{10.160} & 96.319 & 246.704 & 43 \\*
 & 4 & 7.106 & 97.425 & 14400.000 & 6.972 & 7.106 & 97.425 & 4423.650 & 55 \\[0.5em]

\multirow{3}{*}{$R_{25}$} & 2 & 19.301 & 93.007 & 947.809 & 0.000 & 19.301 & 93.007 & 3.752 & 38 \\*
 & 3 & 2.657 & 99.037 & 14400.000 & 0.155 & 2.657 & 99.037 & 25.132 & 38 \\*
 & 4 & 1.978 & 99.283 & 14400.000 & 1.760 & 1.978 & 99.283 & 164.933 & 35 \\[0.5em]

\multirow{3}{*}{$R_{26}$} & 2 & 22.114 & 91.988 & 1151.716 & 0.000 & 22.114 & 91.988 & 7.922 & 30 \\*
 & 3 & 2.655 & 99.038 & 14400.000 & 0.134 & 2.655 & 99.038 & 134.243 & 33 \\*
 & 4 & 2.127 & 99.229 & 14400.000 & 1.857 & \textbf{2.079} & 99.247 & 959.355 & 34 \\[0.5em]

\multirow{3}{*}{$R_{27}$} & 2 & 21.854 & 92.082 & 672.416 & 0.000 & 21.854 & 92.082 & 6.140 & 46 \\*
 & 3 & 8.293 & 96.995 & 14400.000 & 6.338 & 8.293 & 96.995 & 36.696 & 42 \\*
 & 4 & 5.795 & 97.900 & 14400.000 & 5.632 & 5.795 & 97.900 & 597.589 & 54 \\[0.5em]

\multirow{3}{*}{$R_{28}$} & 2 & 24.664 & 91.064 & 1125.835 & 0.000 & 24.664 & 91.064 & 10.402 & 31 \\*
 & 3 & 8.767 & 96.824 & 14400.000 & 6.510 & 8.767 & 96.824 & 434.183 & 34 \\*
 & 4 & 7.121 & 97.420 & 14400.000 & 7.115 & \textbf{6.841} & 97.521 & 4380.698 & 53 \\[0.5em]

\multirow{3}{*}{$R_{29}$} & 2 & 25.193 & 90.872 & 834.477 & 0.000 & 25.193 & 90.872 & 7.659 & 48 \\*
 & 3 & 11.226 & 95.933 & 14400.000 & 9.592 & 11.226 & 95.933 & 92.822 & 49 \\*
 & 4 & 9.430 & 96.583 & 14400.000 & 9.430 & \textbf{7.916} & 97.132 & 4620.861 & 53 \\[0.5em]

\multirow{3}{*}{$R_{30}$} & 2 & 29.031 & 89.482 & 14400.000 & 1.518 & 29.031 & 89.482 & 12.492 & 33 \\*
 & 3 & 12.641 & 95.420 & 14400.000 & 10.660 & \textbf{12.629} & 95.424 & 2115.681 & 59 \\*
 & 4 & 9.651 & 96.503 & 14400.000 & 9.651 & 9.651 & 96.503 & 7327.513 & 61 \\[0.5em]

\multirow{3}{*}{$R_{31}$} & 2 & 24.223 & 91.224 & 815.940 & 0.000 & 24.223 & 91.224 & 4.682 & 32 \\*
 & 3 & 9.759 & 96.464 & 14400.000 & 8.043 & 9.759 & 96.464 & 73.305 & 47 \\*
 & 4 & 2.408 & 99.128 & 14400.000 & 2.327 & 2.408 & 99.128 & 895.978 & 50 \\[0.5em]

\multirow{3}{*}{$R_{32}$} & 2 & 32.916 & 88.074 & 9425.741 & 0.000 & 32.916 & 88.074 & 12.226 & 38 \\*
 & 3 & 13.440 & 95.130 & 14400.000 & 10.924 & \textbf{12.768} & 95.374 & 3510.304 & 39 \\*
 & 4 & 2.510 & 99.091 & 14400.000 & 2.498 & 2.510 & 99.091 & 4798.261 & 45 \\[0.5em]

\multirow{3}{*}{$R_{33}$} & 2 & 22.594 & 91.814 & 612.998 & 0.000 & 22.594 & 91.814 & 6.014 & 41 \\*
 & 3 & 11.413 & 95.865 & 14400.000 & 8.898 & 11.413 & 95.865 & 85.769 & 52 \\*
 & 4 & 9.909 & 96.410 & 14400.000 & 9.834 & \textbf{7.533} & 97.271 & 2000.465 & 45 \\[0.5em]

\multirow{3}{*}{$R_{34}$} & 2 & 33.696 & 87.791 & 14400.000 & 1.830 & 33.696 & 87.791 & 9.183 & 28 \\*
 & 3 & 15.230 & 94.482 & 14400.000 & 12.836 & \textbf{15.016} & 94.559 & 5935.305 & 60 \\*
 & 4 & 8.892 & 96.778 & 14400.000 & 8.891 & \textbf{7.779} & 97.182 & 7181.446 & 67 \\[0.5em]

\multirow{3}{*}{$R_{35}$} & 2 & 25.837 & 90.639 & 759.693 & 0.000 & 25.837 & 90.639 & 5.531 & 35 \\*
 & 3 & 17.244 & 93.752 & 14400.000 & 15.785 & \textbf{15.171} & 94.503 & 153.991 & 42 \\*
 & 4 & 13.261 & 95.195 & 14400.000 & 13.260 & \textbf{10.012} & 96.372 & 4766.097 & 46 \\[0.5em]

\multirow{3}{*}{$R_{36}$} & 2 & 29.543 & 89.296 & 7145.818 & 0.000 & 29.543 & 89.296 & 7.655 & 32 \\*
 & 3 & 17.952 & 93.496 & 14400.000 & 16.544 & \textbf{16.204} & 94.129 & 5067.869 & 56 \\*
 & 4 & 12.677 & 95.407 & 14400.000 & 12.677 & \textbf{11.917} & 95.682 & 6486.656 & 54 \\[0.5em]

\end{longtable}
\endgroup

As shown in Table~\ref{tab:exact_vs_heuristic}, the proposed alternating-direction matheuristic vastly outperforms the compact exact formulation in terms of computational efficiency and robustness, especially as the instance size ($n=24$) and the number of groups ($g$) increase. The table clearly illustrates the competitive performance of the heuristic on small instances. Specifically, for $g \in \{2, 3\}$, the heuristic perfectly matches the exact solution in all cases while requiring significantly less computational time. For $g=4$, the heuristic actually surpasses the exact solver in several instances ($R_{6}$, $R_{12}$, $R_{17}$, and $R_{18}$) where the exact method hits the time limit before closing the optimality gap. Conversely, while the exact method achieves marginally better fits in a few other cases ($R_{3}$, $R_{4}$, $R_{9}$, and $R_{15}$ with $g=4$), it is worth noting that it still hits the time limit without proving optimality in some of these instances as well. More importantly, for larger instances, while the exact solver systematically hits the time limit for $g \in \{3,4\}$ and fails to prove optimality (leaving significant gaps), the heuristic consistently delivers higher-quality solutions in a fraction of the computing time. This robust performance demonstrates the heuristic's capability to effectively navigate the complex combinatorial space of the mixture model without the prohibitive computational cost.

Furthermore, the matheuristic allows us to confirm that the elbow effect observed in the exact method for small instances holds true for larger and more complex populations. Because the exact method struggles to converge for $g \in \{3,4\}$ in these larger cases, the true optimal objective values remain unknown. The proposed matheuristic, however, successfully reaches tight upper bounds, revealing the same sharp drop in the objective function when $g$ matches the true number of latent groups. For example, in instance $R_{25}$ (generated with 3 groups), the objective value drops significantly until reaching 2.657 at $g=3$, followed by a much less pronounced improvement at $g=4$ (1.978). As noted previously, while high levels of noise can slightly smooth this transition, the heuristic's robust performance confirms that this approach acts as a highly reliable mechanism to deduce the unknown number of preference profiles, even in large-scale scenarios where exact solvers fail.

Consequently, this scalable approach bridges the gap between theoretical modeling and practical application. By overcoming the computational bottlenecks of the exact formulation without sacrificing solution quality or structural interpretability, the matheuristic provides a powerful and necessary tool for analyzing real-world preference data, where large numbers of items and diverse user profiles are the norm.

\section{Application to sushi preferences} \label{sec:sushi}

In this section, we apply our proposed methodology to the SUSHI Preference Data Set \citep{kamishima2003nantonac}, a benchmark dataset containing 5000 individual rankings over 10 sushi types, originally introduced to evaluate preference clustering algorithms.\footnote{The aggregated sushi instance, model files, and corresponding computational results are also available in the supplementary repository.} Since our approach operates exclusively on aggregated data, we first compile the observed individual rankings into the pairwise preference count matrix $A = (a_{rs})_{n \times n}$, where $a_{rs}$ denotes the number of respondents who rank sushi type $r$ above sushi type $s$. From this matrix, we construct the normalized cost matrix $C = (c_{rs})_{n \times n}$, whose entries are defined as
\begin{equation} 
    c_{rs} = \frac{a_{rs}}{a_{rs} + a_{sr}}, 
\end{equation} 
so that $c_{rs}$ represents the proportion of respondents who prefer item $r$ to item $s$.

To solve the underlying model, we apply our alternating-direction matheuristic to matrix $C$, running 20 multi-start initializations for each cluster size $g \in \{1, \dots, 10\}$. Table~\ref{tab:sushi_elbow} and Figure~\ref{fig:sushi_elbow_plot} summarize the evolution of the objective value, the cumulative reduction relative to the homogeneous base case ($g=1$), and the overall model fit. To visualize the structural improvements effectively, Figure~\ref{fig:sushi_elbow_plot} displays both the full trajectory of the objective value (left panel) and a truncated view starting from $g=2$ (right panel). This truncation removes the extreme scale distortion caused by the rigid single-consensus assumption, allowing for a finer analysis of the multi-group dynamics.

Choosing the appropriate number of groups ($g$) requires balancing model accuracy with simplicity. While increasing $g$ naturally allows the model to fit the aggregated data better, using too many groups makes it overly complex. This risks capturing random statistical noise rather than meaningful preference patterns, a problem known as overfitting. To avoid this, we analyze the marginal reduction in the objective value as we add new groups, applying a standard elbow criterion to its trajectory. As Figure~\ref{fig:sushi_elbow_plot} shows, moving away from a single global consensus ($g=1$) drastically reduces the objective value. Looking at the adjusted trajectory (right panel), the reductions remain substantial up to $g=4$. Specifically, the transition from $g=3$ to $g=4$ still yields a 44.033\% relative drop in the objective value (reaching a cumulative reduction of over 92\% and an absolute fit of 97.310\%). After $g=4$, the objective value curve flattens noticeably; adding more groups beyond this point provides very small reductions and directly increases the risk of overfitting. Therefore, we select $g=4$ as a practical configuration that successfully balances accuracy with interpretability for our analysis.

\begin{table}[htp!]
\scriptsize
\centering
\caption{Evolution of the objective value, drops, and absolute fit for the SUSHI dataset as $g$ increases.}
\label{tab:sushi_elbow}
\begin{tabular}{crrrr}
\toprule
$g$ & Objective Value & Relative Drop (\%) & Cumulative Drop (\%) & Fit (\%) \\
\midrule
1  & 15.390 & --     & --  & 65.800 \\
2  & 4.253  & 72.365 & 72.365 & 90.550 \\
3  & 2.162  & 49.165 & 85.952 & 95.200 \\
4  & 1.210  & 44.033 & 92.138 & 97.310 \\
5  & 0.734  & 39.339 & 95.231 & 98.370 \\
6  & 0.511  & 30.381 & 96.680 & 98.690 \\
7  & 0.341  & 33.268 & 97.784 & 99.230 \\
8  & 0.253  & 25.806 & 98.356 & 99.420 \\
9  & 0.175  & 30.830 & 98.863 & 99.610 \\
10 & 0.129  & 26.286 & 99.162 & 99.660 \\
\bottomrule
\end{tabular}
\end{table}

\begin{figure}
    \centering
    \scriptsize
    \caption{Evolution of the objective value for the SUSHI dataset. The left panel displays the full trajectory starting from $g=1$, while the right panel provides a truncated view starting from $g=2$ to better illustrate the structural improvements without the scale distortion of the homogeneous case.}
    \begin{minipage}{0.48\textwidth}
        \centering
        \begin{tikzpicture}
        \begin{axis}[
            width=\linewidth, height=6cm,
            title={Full trajectory ($g \geq 1$)},
            xlabel={Number of groups ($g$)}, 
            ylabel={Objective value},
            xmin=0.5, xmax=10.5, ymin=0, ymax=16,
            xtick={1,2,3,4,5,6,7,8,9,10}, ytick={0,4,8,12,16},
            axis x line=bottom, axis y line=left,
            ymajorgrids=true, grid style={dashed, gray!40}
        ]
        \addplot+[
            ycomb,
            thick,
            draw=blue!45,
            mark=*,
            mark options={
                fill=blue!85!black,
                draw=blue!85!black
            }
        ] coordinates {
            (1,15.390)(2,4.253)(3,2.162)(4,1.210)(5,0.734)(6,0.511)(7,0.341)(8,0.253)(9,0.175)(10,0.129)
        };
        \end{axis}
        \end{tikzpicture}
    \end{minipage}\hfill
    \begin{minipage}{0.48\textwidth}
        \centering
        \begin{tikzpicture}
        \begin{axis}[
            width=\linewidth, height=6cm,
            title={Truncated view ($g \geq 2$)},
            xlabel={Number of groups ($g$)}, 
            ylabel={Objective value},
            xmin=1.5, xmax=10.5, ymin=0, ymax=5,
            xtick={2,3,4,5,6,7,8,9,10}, ytick={0,1,2,3,4,5},
            axis x line=bottom, axis y line=left,
            ymajorgrids=true, grid style={dashed, gray!40}
        ]
        \addplot+[
            ycomb,
            thick,
            draw=blue!45,
            mark=*,
            mark options={
                fill=blue!85!black,
                draw=blue!85!black
            }
        ] coordinates {
            (2,4.253)(3,2.162)(4,1.210)(5,0.734)(6,0.511)(7,0.341)(8,0.253)(9,0.175)(10,0.129)
        };
        \end{axis}
        \end{tikzpicture}
    \end{minipage}
    \label{fig:sushi_elbow_plot}
\end{figure}

Having established $g=4$ as our working configuration, we now examine the distinct preference structures identified by the model. Table~\ref{tab:sushi_g4} details the estimated relative weight and the corresponding consensus ranking for each of the four groups.

\begin{table}[ht!]
\centering
\caption{Consensus rankings and estimated weights for the four identified preference groups ($g=4$) in the SUSHI dataset.}
\label{tab:sushi_g4}
\scriptsize
\begin{tabular}{c l l l l}
\toprule
Group & 1 & 2 & 3 & 4 \\
\midrule
Weight & $0.407$ & $0.309$ & $0.177$ & $0.107$ \\
\midrule
1  & Fatty tuna    & Fatty tuna    & Sea urchin    & Salmon roe    \\
2  & Tuna          & Sea urchin    & Cucumber roll & Shrimp        \\
3  & Tuna roll     & Egg           & Tuna          & Sea eel       \\
4  & Squid         & Sea eel       & Sea eel       & Tuna          \\
5  & Salmon roe    & Shrimp        & Shrimp        & Sea urchin    \\
6  & Shrimp        & Salmon roe    & Salmon roe    & Fatty tuna    \\
7  & Sea eel       & Squid         & Squid         & Tuna roll     \\
8  & Egg           & Tuna roll     & Tuna roll     & Cucumber roll \\
9  & Cucumber roll & Tuna          & Egg           & Egg           \\
10 & Sea urchin    & Cucumber roll & Fatty tuna    & Squid         \\
\bottomrule
\end{tabular}
\end{table}

Rather than detailing each ranking exhaustively, a macroscopic view of these four components reveals distinct and competing preference patterns. The largest profile (Group 1, 40.7\%) represents a mainstream, tuna-centric consensus, strongly favoring \emph{Fatty tuna}, \emph{Tuna}, and \emph{Tuna roll}. Group 2 (30.9\%) shares the top preference for \emph{Fatty tuna} but notably elevates \emph{Sea urchin}, \emph{Egg}, and \emph{Sea eel} (\emph{Anago}), suggesting a more traditional, Tokyo-style (\emph{Edomaezushi}) signal. In stark contrast, Group 3 (17.7\%) acts as a compensatory counterweight to the dominant trend; it places \emph{Fatty tuna} at the very bottom and instead favors \emph{Sea urchin} and \emph{Cucumber roll}. Finally, Group 4 (10.7\%) captures an alternative seafood-oriented structure led by \emph{Salmon roe} and \emph{Shrimp}. 

Since the SUSHI dataset was explicitly created to evaluate clustering methods, the existence of multiple preference profiles is expected. However, the significant contribution here is that the MLOP successfully uncovers this rich, competing heterogeneity by operating exclusively on a highly compressed $10 \times 10$ aggregate matrix. This makes it worth positioning our framework with respect to probabilistic approaches in the literature, such as Mixtures of Mallows Models \citep{lu2014effective, vitelli2018probabilistic}. Both approaches aim to identify heterogeneous preference groups and have been successfully applied to the complete individual rankings of the SUSHI dataset, but they rely on fundamentally different methodologies. Probabilistic models estimate distributions over the space of rankings and therefore require access to the full set of 5,000 individual profiles. In contrast, the MLOP is a deterministic combinatorial approach proving that a simple pairwise aggregate representation can still retain enough structural information to recover interpretable preference patterns. Consequently, the proposed framework is especially useful in settings where individual-level rankings are unavailable, computationally expensive to process, or restricted for privacy reasons.

\section{Conclusions and future research} \label{sec:CFR}

In this paper, we have addressed the limitations of the classical Linear Ordering Problem (LOP) when applied to heterogeneous populations. By implicitly assuming a single underlying preference structure, the standard LOP can lead to the structural marginalization of minority groups, producing a single consensus that may contradict the preferences of significant sub-populations. To overcome this, we introduced an extension of the problem that partitions the population into a predefined number of latent groups, jointly estimating their specific linear orders and relative weights directly from an aggregated pairwise comparison matrix. 

From a methodological standpoint, we propose exact mixed-integer programming (MIP) formulations for this novel problem, including a compact reformulation that provides a clear geometric interpretation within the linear ordering polytope. We also develop a multi-start alternating-direction matheuristic, validating both through computational experiments.

The practical value of our framework was validated using the real-world SUSHI preference dataset, a well-known benchmark for preference clustering. Using the elbow criterion, we set $g=4$ and decomposed the aggregated matrix into distinct social profiles. Rather than yielding a single consensus, the method identified a dominant preference for fatty tuna alongside meaningful minority patterns (e.g., sea urchin, sea eel, or alternative seafood). Notably, this structural detail is obtained from a compressed $10 \times 10$ pairwise comparison matrix, whereas probabilistic approaches require thousands of individual rankings to capture similar heterogeneity.

The results of this study suggest several directions for future research. From an algorithmic perspective, developing tailored exact methods (e.g., Branch-and-Cut based on a deeper polyhedral analysis) and enhancing the matheuristic with frameworks such as Iterated Local Search or Variable Neighborhood Search could improve scalability and robustness.

From a modeling viewpoint, extending the framework to determine the number of groups $g$ endogenously, for instance via a complexity-penalized objective, would be valuable. Additionally, handling incomplete pairwise comparison matrices or allowing partial orders would broaden applicability to settings with missing or uncertain data.

% The results presented in this study open several realistic avenues for future research. First, from an algorithmic perspective, developing tailored exact approaches, such as Branch-and-Cut algorithms based on a deeper polyhedral study of the multiple-group linear ordering polytope, could help close the optimality gap for larger instances. Furthermore, enhancing the matheuristic with more sophisticated frameworks, such as Iterated Local Search (ILS) or Variable Neighborhood Search (VNS), could further improve computational efficiency and solution robustness.

% Second, from a modeling perspective, the current framework assumes that the number of groups $g$ is fixed a priori. Integrating a mechanism to automatically determine the optimal $g$ during the optimization process, perhaps by introducing a penalty term for model complexity in the objective function, would be a valuable extension. Finally, extending the current formulations to handle incomplete pairwise comparison matrices or to output partial orders rather than strict linear orders would broaden the applicability of the model to a wider range of real-world decision-making scenarios with missing information.

\section*{Acknowledgments}
{\sloppy
C. Domínguez and M. Landete were supported by project PID2021-122344NB-I00, funded by MCIN/AEI/10.13039/501100011033 and by “ERDF A way of making Europe”. C. Domínguez was also supported by project PID2024-156594NB-C21, funded by MCIU/AEI/10.13039/501100011033 and by ERDF/EU, and by Generalitat Valenciana under project Emergent CIGE/2024/132. J.D. Jaime-Alcántara and M. Landete were also supported by grant CIPROM/2024/34, funded by the Conselleria de Educación, Cultura, Universidades y Empleo, Generalitat Valenciana. J.A. Aledo was supported by SBPLY/21/180225/000062 (Junta de Comunidades de Castilla-La Mancha and ERDF A way of making Europe), PID2022-139293NB-C32 (MICIU/AEI/10.13039/501100011033 and ERDF, EU) and 2025-GRIN-38476 (Universidad de Castilla-La Mancha and ERDF A way of making Europe).\par}

%% Loading bibliography style file
% \bibliographystyle{model1-num-names}
\bibliographystyle{cas-model2-names}

% Loading bibliography database
\bibliography{references}

\end{document}